\documentclass[12pt,a4paper]{article}
\usepackage{amsmath}
\usepackage{amsfonts}
\usepackage{amssymb}
\usepackage{amsthm}
\usepackage[table]{xcolor}
\usepackage[colorlinks=true,linkcolor=blue,citecolor=blue]{hyperref}
\usepackage{enumerate}
\usepackage{graphicx}
\usepackage{sudoku}
\usepackage{tikz}
\usetikzlibrary{arrows}
\usepackage{caption}
\usepackage{subcaption}
\usepackage{geometry,rotating}
\usepackage[boxed]{algorithm2e}
\usepackage{framed}
\usepackage{multirow}
\usepackage{chngpage} 

\newtheorem{theorem}{Theorem}[section]
\newtheorem{proposition}{Proposition}[section]

\theoremstyle{remark}
\newtheorem{remark}{Remark}[section]
\newtheorem{example}{Example}[section]

\newcommand\argmin{\mathop{\rm argmin}}
\newcommand{\diag}{\operatorname{diag}}
\newcommand{\rank}{\operatorname{rank}}

\newcommand{\qede}{\hspace*{\fill}$\Diamond$\medskip}

\newcommand{\R}{\mathbb{R}}

\newcommand{\eps}{\varepsilon}
\newcommand{\tr}{\operatorname{tr}}

\title{Douglas--Rachford Feasibility Methods for\\ Matrix Completion Problems\footnote{All authors are at CARMA, University of Newcastle, Callaghan, NSW 2308, Australia.}}
\author{Francisco J. Arag\'on Artacho\thanks{Email: \url{francisco.aragon@ua.es}}
  \and Jonathan M. Borwein\thanks{Email: \url{jon.borwein@gmail.com}}
  \and Matthew K. Tam\thanks{Email: \url{matthew.k.tam@gmail.com}}}

\begin{document}

\maketitle

\begin{abstract}
 In this paper we give general recommendations for successful application of the Douglas--Rachford reflection method  to convex and non-convex real matrix-completion problems. These guidelines are demonstrated by various illustrative examples.
\end{abstract}

\paragraph{Keywords} Douglas--Rachford, projections, reflections, matrix completion, feasibility problems, proteins reconstruction, Hadamard matrices

\paragraph{Mathematics Subject Classification (2010)} 65K05, 90C59, 47N10

\section{Introduction}
 \emph{Matrix completion} may be posed as an inverse problem in which a matrix possessing certain  properties is to be reconstructed knowing only a subset of its entries. A great  many problems can be usefully cast within this framework (see \cite{J90,L01} and the references therein).

By encoding each of the properties which the matrix possesses along with its known entries as constraint sets, matrix completion can be cast as a \emph{feasibility problem}. That is, it is reduced to the problem of finding a point contained in the intersection of a (finite) family of sets.

\emph{Projection algorithms} comprise a class of general purpose
iterative methods which are frequently used to solve feasibility
problems (see \cite{BB96} and the references therein). At each step,
these methods utilize the nearest point projection onto each of the
individual constraint sets. The philosophy here is that it is
simpler to consider each constraint separately (through their
nearest point projections), rather than the intersection directly.

Applied to closed convex sets in Euclidean space, the behaviour of
projection algorithms is quite well understood. Moreover, their
simplicity and ease of implementation has ensured continued
popularity for successful applications in a variety of non-convex
optimization and reconstruction problems \cite{BCL02,BCL03,ABT13}.
This is despite the absence of sufficient theoretical justification,
although there are some useful beginnings \cite{BS11,AB12,HL12}. In
many of these settings the \emph{Douglas--Rachford} method (see
Section~\ref{sec:DR}) has been observed to perform better than other
projection algorithms, and hence will be the  projection algorithm
of choice for this paper. A striking example of its better behaviour
is detailed in Section \ref{sec:prot}.

We do note that there are many other useful projection algorithms, and many applicable variants. See for example, the \emph{method of cyclic projections}~\cite{BB93,BBL97}, \emph{Dykstra's method} \cite{BD86,BB94,BR05}, the \emph{cyclic Douglas--Rachford method} \cite{BT13}, and many references contained in these papers.

  \begin{quote}In a recent paper \cite{ABT13}, the present authors observed that many successful non-convex applications of the Douglas--Rachford method can be considered as matrix completion problems. The aim of this paper is to give general guidelines for successful application of the Douglas--Rachford method to a variety of (real) matrix reconstruction problems, both convex and non-convex.\end{quote}

\bigskip

 The remainder of the paper is organised as follows: In Section \ref{sec:pre} we first recall what is known about the Douglas--Rachford method, and then discuss our modelling philosophy.
In Section~\ref{sec:conv} we consider several matrix completion problems in which all the constraint sets are convex:
positive semi-definite matrices, doubly-stochastic matrices, Euclidean distance matrices; before discussing adjunction of noise.
 This is followed in Section~\ref{sec:nonconv} by a more detailed description of several classes in which  some of the constraint sets are non-convex. In the first two subsections, we first look at low-rank constraints and then at low-rank Euclidean distance problems. In  Section~\ref{sec:prot} we present a first detailed application by  viewing protein reconstruction from NMR data as a low-rank Euclidean distance problem. The final three subsections  of  Section~\ref{sec:nonconv} carefully consider Hadamard, skew-Hadamard and circulant-Hadamard matrix problems, respectively.  We end with various concluding remarks in Section~\ref{sec:conc}.

\section{Preliminaries}\label{sec:pre}
Let $E$ denote a finite dimensional Hilbert space -- a \emph{Euclidean space}. We will typically be considering the Hilbert space of real $m\times n$ matrices whose inner product is given by
 $$\langle A,B\rangle :=\tr(A^TB),$$
where the superscript $T$ denotes the transpose, and $\tr(\cdot)$ the trace of an $n\times n$ square matrix. The induced norm is the \emph{Frobenius norm} and can be expressed as
 $$\|A\|_F:=\sqrt{\tr(A^TA)}=\sqrt{\sum_{i=1}^n\sum_{j=1}^ma_{ij}^2}.$$

A \emph{partial (real) matrix} is an $m\times n$ array for which only entries in certain locations are known. A \emph{completion} of the partial matrix $A=(a_{ij})\in\R^{m\times n}$, is a matrix $B=(b_{ij})\in\R^{m\times n}$ such that if $a_{ij}$ is specified then $b_{ij}=a_{ij}$. The problem of \emph{(real) matrix completion} is the following: \emph{Given a partial matrix find a completion having certain properties of interest (e.g. positive semi-definite).}

Throughout this paper, we formulate matrix completion as a \emph{feasibility problem}. That is,
 \begin{equation} \label{eq:feasibility}
  \text{Find }X\in \bigcap_{i=1}^NC_i\subseteq\R^{m\times n}.
 \end{equation}
Let $A$ be the partial matrix to be completed. We will take $C_1$ to be the set of all completions of $A$, and the sets $C_2,\dots,C_N$ will be chosen such that their intersection has the properties of interest. In this case, (\ref{eq:feasibility}) is precisely the problem of matrix completion.

\subsection{The Douglas--Rachford method}\label{sec:DR}

Recall that the \emph{nearest point projection onto $S\subseteq E$} is the (set-valued) mapping $P_S:E\to 2^S$ defined by
 $$P_Sx:=\argmin_{s\in S}\|s-x\|=\{s\in S:\|s-x\|=\inf_{y\in S}\|y-x\|\}.$$
The \emph{reflection with respect to $S$} is the (set-valued) mapping $R_S:E\to 2^E$ defined by
 $$R_S:=2P_S-I,$$
where $I$ denotes the identity map.

In an abuse of notation, if $P_Sx$ (resp. $R_Sx$) is singleton we use $P_Sx$ (resp. $R_Sx$) to denote the  unique nearest point.

We now recall what is know about the Douglas--Rachford method, as specialized to finite dimensional spaces.

\begin{theorem}[Convex Douglas--Rachford iterations]\label{th:DR}
 Suppose $A,B\subseteq E$ are closed and convex. For any $x_0\in E$ define
  $$x_{n+1}:=T_{A,B}x_n\text{ where }T_{A,B}:=\frac{I+R_BR_A}{2}.$$
 Then if:
  \begin{enumerate}[(a)]
   \item $A\cap B\neq\emptyset$, $(x_n)$ converges to a point $x$ such that $P_Ax\in A\cap B$.
   \item $A\cap B=\emptyset$, $\|x_n\|\to+\infty$.
  \end{enumerate}
\end{theorem}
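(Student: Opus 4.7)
The plan is to recognize $T_{A,B}$ as a firmly nonexpansive operator, identify its fixed point set in terms of $A\cap B$, and then invoke the classical convergence theory for iterations of such operators in finite dimensions.

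First I would record that, since $A$ and $B$ are closed and convex, the projectors $P_A$ and $P_B$ are firmly nonexpansive, so the reflections $R_A$ and $R_B$ are nonexpansive. Their composition $R_BR_A$ is therefore nonexpansive, and $T_{A,B}=\tfrac{1}{2}(I+R_BR_A)$ is firmly nonexpansive, being the $\tfrac{1}{2}$-average of $I$ and a nonexpansive map.

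Next I would characterize the fixed points of $T_{A,B}$, showing that $\Fix T_{A,B}\neq\emptyset$ exactly when $A\cap B\neq\emptyset$, and that $P_A(\Fix T_{A,B})\subseteq A\cap B$. One direction is immediate: any $p\in A\cap B$ satisfies $R_Ap=R_Bp=p$ and hence $T_{A,B}p=p$. Conversely, if $T_{A,B}x=x$, then $R_BR_Ax=x$; setting $y:=R_Ax=2P_Ax-x$, the identity $R_By=x$ rearranges to
$$P_By=\tfrac{1}{2}(x+y)=P_Ax,$$
so $P_Ax\in A\cap B$.

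For part (a), since $\Fix T_{A,B}\neq\emptyset$, the Krasnoselskii--Mann theorem (equivalently, Opial's theorem applied to the averaged map $T_{A,B}$) yields weak convergence of $(x_n)$ to some $x\in\Fix T_{A,B}$; in finite dimensions this is just norm convergence. The previous paragraph then gives $P_Ax\in A\cap B$, as claimed.

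Part (b) is the main obstacle. With $A\cap B=\emptyset$, the fixed point characterization yields $\Fix T_{A,B}=\emptyset$, but one still needs $\|x_n\|\to+\infty$, not merely the absence of convergence. The natural route is via the asymptotic theory of Pazy (and Baillon--Bruck--Reich): for a firmly nonexpansive operator $T$ on a Hilbert space the difference $x_n-x_{n+1}$ converges to the minimal displacement vector $v$ of $T$, with $\|v\|=\inf_{x}\|x-Tx\|$. For $T=T_{A,B}$ this $v$ can be identified with the (scaled) ``gap vector'' associated with $\overline{B-A}$, which is nonzero precisely when $A\cap B=\emptyset$. From $x_{n+1}-x_n\to -v\neq 0$ one concludes that $\|x_n\|\to+\infty$ (indeed, linearly). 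The delicate part is this identification of the minimal displacement with the gap between $A$ and $B$; this is standard in the Douglas--Rachford literature but is the step carrying the most analytic weight.
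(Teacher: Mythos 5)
Your part (a) is correct and is essentially the standard argument behind the paper's own ``proof,'' which consists only of the citation to \cite[Th.~3.13]{BCL04}: firm nonexpansivity of $T_{A,B}$ as the average of $I$ with the nonexpansive composition $R_BR_A$, the fixed-point identity $P_By=\tfrac12(x+y)=P_Ax$ showing $P_A(\Fix T_{A,B})\subseteq A\cap B$ and $\Fix T_{A,B}\neq\emptyset\iff A\cap B\neq\emptyset$, and Krasnoselskii--Mann/Opial convergence, which in the Euclidean setting is norm convergence.

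Part (b), however, has a genuine gap. You claim the minimal displacement vector $v$ of $T_{A,B}$ ``is nonzero precisely when $A\cap B=\emptyset$,'' and your entire conclusion rests on $x_{n+1}-x_n\to -v\neq 0$. But $v$ is (under the identification you invoke) the element of minimal norm of $\overline{B-A}$, so $v=0$ whenever $0\in\overline{B-A}$, which is strictly weaker than $A\cap B\neq\emptyset$. Concretely, take $A=\R\times(-\infty,0]$ and $B=\{(t,s)\in\R^2\mid s\geq e^{-t}\}$: both are closed and convex, $A\cap B=\emptyset$, yet the gap $\inf_{a\in A,\,b\in B}\|a-b\|$ is $0$, so $v=0$ and $x_{n+1}-x_n\to 0$. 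Your argument then yields nothing, while the theorem still asserts $\|x_n\|\to+\infty$. The standard repair --- and the one effectively underlying the cited result --- is to use only what your fixed-point characterization already gives, namely $\Fix T_{A,B}=\emptyset$ when $A\cap B=\emptyset$, and then to invoke the Bruck--Reich theory of strongly nonexpansive mappings: every firmly nonexpansive map on a Hilbert space is strongly nonexpansive, and a strongly nonexpansive map with empty fixed point set satisfies $\|T^nx\|\to+\infty$ for every starting point. Without that (or an equivalent argument covering the case $v=0$ with $A\cap B=\emptyset$, where one must still rule out bounded subsequences), your proof of (b) only covers the situation in which the two sets are at positive distance.
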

\begin{proof}
 See, for example, \cite[Th.~3.13]{BCL04}.
\end{proof}

The results of Theorem~\ref{th:DR} can only be directly applied to the problem of finding a point in the intersection of two sets. For matrix completion problems formulated as feasibility problems with greater than two sets, we use a well known \emph{product space reformulation.}

\begin{example}[Product space reformulation]\label{ex:product}
For constraint sets $C_1,C_2,\dots,C_N$ define\footnote{The set $D$ is sometimes called the \emph{diagonal}.}
 $$D:=\{(x,x,\dots,x)\in E^N|x\in E\},\quad C:=\prod_{i=1}^NC_i.$$
We now have an equivalent feasibility problem since
 $$ x\in\bigcap_{i=1}^NC_i \iff (x,x,\dots,x)\in D\cap C. $$
Moreover, $T_{D,C}$ can be readily computed whenever $P_{C_1},P_{C_2},\dots,P_{C_N}$ can be since
 $$P_Dx=\left(\frac{1}{N}\sum_{i=1}^Nx_i\right)^N,\quad P_Cx=\prod_{i=1}^N P_{C_i}x_i.$$
For further details see, for example, \cite[Section~3]{ABT13}. \qede
\end{example}

In the non-convex setting there are some useful theoretical beginnings. For a Euclidean sphere and affine subspace, with the reflection performed first with respect the sphere, Borwein and Sims \cite{BS11} show that, appropriately viewed, the Douglas--Rachford scheme converges locally. An explicit region of convergence was given by Arag\'on Artacho and Borwein \cite{AB12} for $\R^2$. Hesse and Luke \cite{HL12} obtained local convergence results using a relaxed local version of firm nonexpansivity and appropriate regularity conditions, assuming that the reflection is performed first with respect to a subspace. We note that varying the order of the reflection does not  make a substantive difference.

\subsection{Modelling philosophy}
As illustrated for Sudoku and other NP-complete combinatorial problems in \cite{ABT13}, there are typically many ways to formulate the constraint set for a given matrix completion problem. For example, by choosing different sets $C_2,C_3,\dots,C_N$, in (\ref{eq:feasibility}), such that $\cap_{i=2}^NC_i$ has the properties of interest. To apply the Douglas--Rachford method, these sets will be chosen in such a way that their individual nearest point projections are succinctly simple to compute --- ideally in closed form. There is frequently a trade-off between the number of sets in the intersection, and the simplicity of their projections. For example, one extreme would be to encode the property of interest in a single constraint set. In this case, it is likely that its projection is difficult to compute.

To illustrate this philosophy, consider the following example which we revisit in Section~\ref{sec:doublystoc}. Suppose the property of interest is the constraint
 $$\left\{X\in\R^{m\times n}|X_{ij}\geq 0,\sum_{k=1}^nX_{kj}=1\text{ for }i=1,\dots,m\text{ and }j=1,\dots,n\right\}.$$
This set is equal to the intersection of $C_2$ and $C_3$ where
 \begin{align*}
   C_2 &:= \left\{X\in\R^{m\times n}|X_{ij}\geq 0\text{ for }i=1,\ldots,m\text{ and }j=1,\ldots,n\right\},\\
   C_3 &:= \left\{X\in\R^{m\times n}|\sum_{i=1}^nX_{ij}=1\text{ for }j=1,\ldots,n\right\}.
 \end{align*}
Here the projections onto the cone $C_2$ and the affine space $C_3$ can be easily computed (see Section~\ref{sec:doublystoc}). In contrast, the projection directly onto $C_2\cap C_3$ amounts to finding the nearest point in the convex hull of the set of matrices having a one in each row and remaining entries zero. This projection is less straightforward, and has no explicit form. For details, see \cite{CY11}.

The order of the constraint sets in (\ref{eq:feasibility}) also requires some consideration. For matrix completion problems with two constraints, we can and do directly apply the Douglas--Rachford method to $C_1\cap C_2$,  with the reflection first performed with respect to the set $C_1$. For matrix completion problems with more than two constraints, we apply the Douglas--Rachford method to the product formulation of Example~\ref{ex:product}, with the reflection with respect to $D$ performed first. In this case, the solution is obtained by projecting onto $D$ and thus can be monitored by considering only a single product coordinate.

In non-convex applications, the problem formulation chosen often determines whether or not the Douglas--Rachford scheme can successfully solve the problem at hand  always, frequently or never, see also \cite{ABT13}.
 Hence, in the rest of this paper we focus on \emph{naive} or direct implementation of the Douglas-Rachford method while focusing on the choice of an appropriate model and the computation of the requisite projections/reflections. In a followup paper, we will look at more refined variants for our two capstone applications: to protein reconstruction and to Hadamard matrix problems.

\section{Convex Problems}\label{sec:conv}

We now look, in order, at positive-definite matrices and correlation matrices, doubly-stochastic matrices, and Euclidean distance matrices before discussing adjunction of noise.

\subsection{Positive semi-definite matrices}
Let $S_n$ denote the set of real $n\times n$ symmetric matrices. Recall that a matrix $A=(A_{ij})\in\R^{n\times n}$ is said to be \emph{positive semi-definite}  if
  \begin{equation}\label{eq:PSD}
   A\in S_n\text{ and }x^TAx\geq 0\text{ for all }x\in\R^n.
  \end{equation}
The set of all such matrices form a closed convex cone (see \cite[Ex.~1, Sec.~1.2]{BL06}), and shall be denoted by $S_+^n$. The \emph{Loewner partial order} is defined on $S_n$ by taking $A\succeq B$ if $A-B\in S_+^n$.
Recall that  a symmetric matrix is positive semi-definite if and only if all its eigenvalues are non-negative.

Let us consider the matrix completion problem where only some entries of the positive semi-definite matrix $A$ are known, and denote by $\Omega$ the location of these entries (i.e. $(i,j)\in\Omega$ if $A_{ij}$ is known). Without loss of generality, we may assume that $\Omega$ is symmetric in the sense that $(i,j)\in\Omega$ if and only if $(j,i)\in\Omega$. Consider the convex sets
 \begin{align}\label{eq:corrconstraints}
  C_1 := \{X\in\R^{n\times n}|X_{ij}=A_{ij}\text{ for all }(i,j)\in\Omega\}, \quad C_2:=S_n^+.
 \end{align}
Then $X$ is a positive semi-definite matrix that completes $A$ if and only if $X\in C_1\cap C_2$.

The set $C_1$ is a closed affine subspace. Its projection is straightforward, and given pointwise by
 \begin{equation}\label{eq:C1nn}
  P_{C_1}(X)_{ij}=\left\{\begin{array}{cc}
                     A_{ij} & \text{if }(i,j)\in\Omega, \\
                     X_{ij} & \text{otherwise};
                    \end{array}\right.
 \end{equation}
for all $i,j=1,\dots,n$.

\begin{theorem}[{\cite[Th.~2.1]{H86}}]\label{th:h86}
 Let $X\in\R^{n\times n}$. Define $Y=(A+A^T)/2$ and let $Y=UP$ be a polar decomposition (see \cite[Th.~1.1]{H86}). Then
  \begin{equation}\label{eq:projPSD}
   P_{C_2}(X)= \frac{Y+P}{2}.
  \end{equation}
\end{theorem}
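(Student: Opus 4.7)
The plan is to combine two ideas: an orthogonal decomposition argument that reduces the problem to the symmetric case, and the spectral theorem which handles symmetric matrices. First I would note that every $X\in\R^{n\times n}$ splits orthogonally (under the Frobenius inner product) as $X = Y + Z$ with $Y = (X+X^T)/2$ symmetric and $Z = (X-X^T)/2$ skew-symmetric, since $\langle Y,Z\rangle=\tr(YZ)=0$ (indeed $\tr(YZ)=\tr((YZ)^T)=\tr(Z^TY^T)=-\tr(ZY)=-\tr(YZ)$). Because $S_n^+ \subseteq S_n$, for any $M\in S_n^+$ the Pythagorean identity
\[\|X - M\|_F^2 = \|Y - M\|_F^2 + \|Z\|_F^2\]
holds. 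The second term is independent of $M$, so $P_{C_2}(X)=P_{C_2}(Y)$ and the problem reduces to projecting the symmetric matrix $Y$ onto $S_n^+$.

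Next, I would invoke the spectral theorem to write $Y=Q\Lambda Q^T$ with $Q$ orthogonal and $\Lambda=\diag(\lambda_1,\dots,\lambda_n)$. Since the Frobenius norm is orthogonally invariant and $M\mapsto Q^TMQ$ is a bijection of $S_n^+$ onto itself, the projection of $Y$ onto $S_n^+$ is obtained by conjugating the projection of $\Lambda$ onto $S_n^+$. I would then argue that the optimal $M'=Q^TMQ$ must be diagonal, so the problem becomes separable: minimize $\sum_i(\lambda_i-\sigma_i)^2$ over $\sigma_i\geq 0$, which gives $\sigma_i=\max(\lambda_i,0)$. Pulling back by $Q$, the projection is
\[Y_+ := Q\Lambda_+ Q^T, \qquad \Lambda_+ := \diag(\max(\lambda_1,0),\dots,\max(\lambda_n,0)).\]

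Finally, I would identify $Y_+$ with the formula $(Y+P)/2$. Using the elementary identity $\max(\lambda,0)=(\lambda+|\lambda|)/2$, and defining $|Y| := Q|\Lambda|Q^T$, we obtain $Y_+ = (Y+|Y|)/2$. For the symmetric matrix $Y$, the positive factor in its polar decomposition $Y=UP$ satisfies $P=(Y^TY)^{1/2}=(Y^2)^{1/2}=|Y|$, so $P_{C_2}(X)=(Y+P)/2$ as claimed.

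The main obstacle will be the step showing that the optimal $M'$ must be diagonal; naively zeroing out off-diagonal entries of a PSD matrix need not preserve positive semi-definiteness. A clean path is to use the first-order optimality (normal cone) condition for projection onto a convex cone, from which one deduces that the minimizer commutes with $\Lambda$ and, being symmetric, is therefore simultaneously diagonalizable; alternatively, one may invoke the von Neumann trace inequality to show that, among PSD matrices with prescribed eigenvalues, the Frobenius distance to $Y$ is minimized by those sharing the eigenvectors of $Y$, reducing to the separable diagonal problem.
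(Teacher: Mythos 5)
Your argument is correct, and it is essentially the proof of the cited result \cite[Th.~2.1]{H86}; the paper itself supplies no proof, only the citation (note the statement's typo conflating $A$ and $X$, which you resolve correctly as $Y=(X+X^T)/2$). The one step you flag as an obstacle --- that the minimizer $M'=Q^TMQ$ may be taken diagonal --- can be closed even more cheaply than by the normal-cone or von Neumann routes you propose: since any $M'\succeq 0$ has nonnegative diagonal entries, one has
\begin{equation*}
\|\Lambda-M'\|_F^2=\sum_i(\lambda_i-M'_{ii})^2+\sum_{i\neq j}(M'_{ij})^2\;\geq\;\sum_{i:\,\lambda_i<0}\lambda_i^2,
\end{equation*}
and the feasible point $M'=\Lambda_+$ attains this lower bound, so no structural claim about the optimizer is needed. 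With that observation your reduction to the symmetric part, the eigenvalue truncation, and the identification $P=(Y^2)^{1/2}=|Y|$ give a complete and correct proof of \eqref{eq:projPSD}.
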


\begin{remark}
 For $X\in S_n$, $Y=X$ in the statement of Theorem~\ref{th:h86}. If this is the case, the computation of $P_{C_2}$ is also simplified.

 If the initial matrix is symmetric, then the corresponding Douglas--Rachford iterates are too. This condition can be easily satisfied. For instance, if $X\in\R^{n\times n}$ then the iterates can instead be computed starting at $P_{S_n}(X)=(X+X^T)/2$ or $XX^T\in S_n$.

 Of course, for symmetric iterates only the the upper (or lower) triangular matrix need be computed. \qede
\end{remark}

\begin{remark}
 The matrices $U$ and $P$ can also be easily obtained from a \emph{singular value decomposition} (see \cite[p.~205]{HJ85}). For if $Y=WSV^T$ is a singular value decomposition then
  $$P=VSV^T,\quad U=WV^T$$
  produces $P$ and $U$. \qede
\end{remark}

\begin{remark}[Positive definite matrices]
 Recall that a real symmetric $n\times n$ matrix is said to be \emph{positive definite} if the inequality in (\ref{eq:PSD}) holds strictly whenever $x\neq 0$. Denote the set of all such matrices by $S^n_{++}$. Since $S^n_{++}$ is not closed, the problem of positive definite matrix completion cannot be directly cast within this framework by setting $C_2:=S^n_{++}$.

 In practice, one might  wish to consider a closed convex subset of $S_{++}^n$. For example, one could instead define
  \begin{equation} \label{eq:epsPD}
   C_2:=\{X\in\R^{n\times n}|X^T=X, x^TXx\geq \epsilon\|x\|^2\text{ for all }x\in\R^n\},
  \end{equation}
 for some small $\epsilon>0$. Then (\ref{eq:epsPD}) is equivalent to requiring that the eigenvalues be not less than $\epsilon$. \qede
\end{remark}

One can apply our methods to finding semi-definite solutions to matrix Riccati equations \cite{ABM93}.

\subsubsection{Correlation matrices}
An important class of positive semi-definite matrices is the \emph{correlation matrices}. Given random variables $X_1,X_2,\dots,X_n$, the associated correlation matrix is an element of $[-1,1]^{n\times n}$ whose $ij$th entry is the correlation between variables $X_i$ and $X_j$. Since, any random variable perfectly correlates with itself, the entries along the main diagonal of any correlation matrix are all ones. Consequently,
 \begin{equation}\label{eq:corr}
  \{(i,i)|i=1,\dots,n\}\subseteq\Omega, \text{ and }A_{ii}=1\text{ for }i=1,\dots,n.
 \end{equation}
Moreover whenever (\ref{eq:corr}) holds, $A$ is necessarily contained in $[-1,1]^{n\times n}$. This is a consequence of the following inequality.

\begin{proposition}[{\cite[p.~398]{HJ85}}]
 Let $A=(a_{ij})\in S^n_+$. Then
  $$a_{ii}a_{jj}\geq a_{ij}^2.$$
\end{proposition}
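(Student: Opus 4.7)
The plan is to reduce the claim to the familiar fact that a positive semi-definite $2\times 2$ symmetric matrix has non-negative determinant. Since $A\in S_+^n$, by symmetry $a_{ji}=a_{ij}$, and the $2\times 2$ principal submatrix
$$M := \begin{pmatrix} a_{ii} & a_{ij} \\ a_{ij} & a_{jj}\end{pmatrix}$$
inherits positive semi-definiteness from $A$. Indeed, for any $(\alpha,\beta)\in\R^2$, the vector $x\in\R^n$ with $x_i=\alpha$, $x_j=\beta$, and zeros elsewhere satisfies $x^T A x = (\alpha,\beta)\, M (\alpha,\beta)^T \geq 0$, so $M\in S_+^2$.

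Next, I would invoke the characterisation of positive semi-definiteness by eigenvalues (stated in the excerpt): both eigenvalues of $M$ are non-negative, so their product, which equals $\det M = a_{ii}a_{jj} - a_{ij}^2$, is non-negative. This yields precisely $a_{ii}a_{jj}\geq a_{ij}^2$.

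There is no real obstacle here; the only care needed is the quick justification that a principal submatrix of a PSD matrix is itself PSD, which is immediate from the above test-vector construction. As an alternative, one could avoid invoking $\det M$ and instead substitute $x = e_i + t e_j$ into $x^T A x\geq 0$ to obtain the quadratic $a_{jj} t^2 + 2 a_{ij} t + a_{ii}\geq 0$ for all $t\in\R$; if $a_{jj}>0$, non-negativity of the discriminant-like quantity gives $a_{ij}^2 \leq a_{ii} a_{jj}$, while the case $a_{jj}=0$ forces $a_{ij}=0$ (else the quadratic becomes linear and unbounded below), and the inequality holds trivially. Either route finishes the proof in a few lines.
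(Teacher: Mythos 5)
Your proof is correct. The paper itself gives no argument for this proposition --- it is stated only with a citation to Horn and Johnson --- so there is nothing to compare against, but your reduction to the $2\times 2$ principal submatrix $M$ being positive semi-definite (via the test-vector supported on coordinates $i$ and $j$) followed by $\det M = a_{ii}a_{jj}-a_{ij}^2\geq 0$ is the standard argument, and your alternative via the quadratic $a_{jj}t^2+2a_{ij}t+a_{ii}\geq 0$, including the degenerate case $a_{jj}=0$, is also handled correctly. (The case $i=j$ is trivially an equality, so the implicit assumption $i\neq j$ costs nothing.)
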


Thus, if $A$ is an incomplete correlation matrix, without loss of generality we may assume that (\ref{eq:corr}) holds. In this case, $X$ is correlation matrix that completes $A$ if and only if $X\in C_1\cap C_2$, as defined in (\ref{eq:corrconstraints}).

\bigskip Consider now the problem of generating a random sample of correlation matrices. This is the case, for example, when one uses simulation to determine an unknown probability distribution \cite{AR13, PHB13}.\bigskip

The Douglas--Rachford method provides a heuristic for generating such a sample by applying the method to initial points chosen according to some probability distribution. In this case, the set of known indices, and their corresponding values, are
 $$\Omega=\{(i,i)|i=1,\dots,n\},\text{ and }A_{ii}=1\text{ for }i=1,\dots,n.$$

The distribution of the entries in 100000 matrices of size $5\times 5$ obtained from three different sets of choices of initial point distribution is shown in Figure~\ref{fig:correlation}.

\begin{figure}
  \centering
  \begin{subfigure}[b]{0.3\textwidth}
    \centering
    \includegraphics[width=\textwidth]{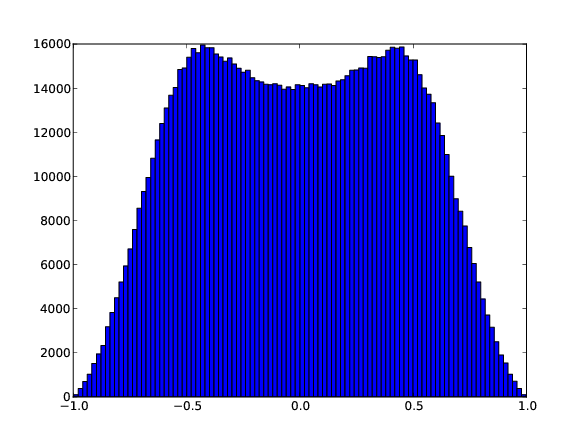}
    \caption{$X_0:=Y$.}
  \end{subfigure}
  \begin{subfigure}[b]{0.3\textwidth}
    \centering
    \includegraphics[width=\textwidth]{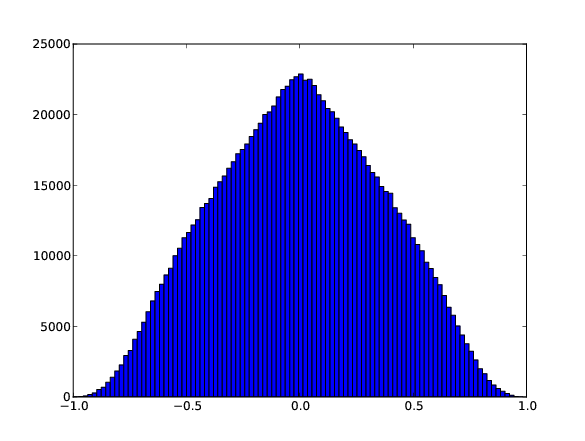}
    \caption{$X_0:=(Y+Y^T)/2$.}
  \end{subfigure}
  \begin{subfigure}[b]{0.3\textwidth}
    \centering
    \includegraphics[width=\textwidth]{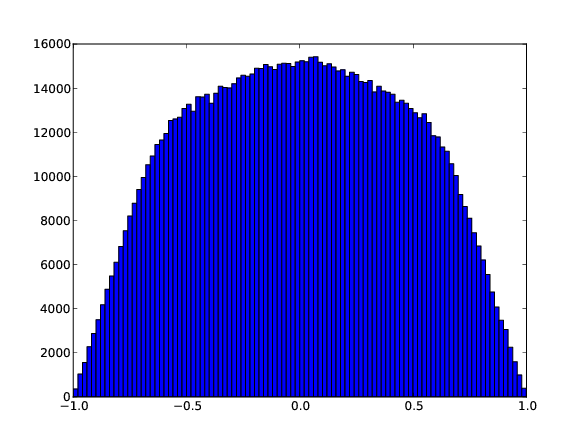}
    \caption{$X_0:=YY^T$.}
  \end{subfigure}
  \caption{Distribution of entries in the collections of correlation matrices generated by different  initialisations of the Douglas--Rachford algorithm. The initial point is $X_0$, and $Y$ is a random matrix in $[-1,1]^{5\times 5}$. Note $(Y+Y^T)/2,YY^T\in S_5$.}\label{fig:correlation}
\end{figure}

\subsection{Doubly stochastic matrices}\label{sec:doublystoc}
Recall that a matrix $A=(A_{ij})\in\R^{m\times n}$ is said to be \emph{doubly stochastic} if
 \begin{equation}
  \sum_{i=1}^mA_{ij}=\sum_{j=1}^nA_{ij}=1, A_{ij}\geq 0\text{ for }i=1,\dots,m \text{ and }j=1,\dots,n.
 \end{equation}
The set of all doubly stochastic matrices are known as the \emph{Birkhoff polytope}, and can be realised as the convex hull of the set of permutation matrices (see, for example, \cite[Th.~1.25]{BL06}).

Let us now consider the matrix completion problem where only some entries of a doubly stochastic matrix $A$ are known, and denote by $\Omega$ the location of these entries (i.e., $(i,j)\in\Omega$ if $A_{ij}$ is known). The set of all such candidates is given by 
\begin{align}
C_1 &:= \{X\in\R^{m\times n}|X_{ij}=A_{ij}\text{ for all }(i,j)\in\Omega\}, \\
\intertext{which is similar to (\ref{eq:corrconstraints}). The Birkhoff polytope may be expressed as the intersection of the three convex sets}
 C_2 &:= \left\{X\in\R^{m\times n}|\sum_{i=1}^mX_{ij}=1\text{ for }j=1,\dots,n\right\},\\
 C_3 &:= \left\{X\in\R^{m\times n}|\sum_{j=1}^nX_{ij}=1\text{ for }i=1,\dots,m\right\},\\
 C_4 &:= \{X\in\R^{m\times n}|X_{ij}\geq 0\text{ for }i=1,\dots,m\text{ and }j=1,\dots,n\}.
\end{align}
Then $X$ is a double stochastic matrix that completes $A$ if and only if $X\in C_1\cap C_2\cap C_3\cap C_4.$

As in (\ref{eq:C1nn}), the set $C_1$ is a closed affine subspace whose projection is given pointwise by
 \begin{equation}\label{eq:C1mn}
 P_{C_1}(X)_{ij}=\left\{\begin{array}{cc}
                     A_{ij} & \text{if }(i,j)\in\Omega, \\
                     X_{ij} & \text{otherwise};
                    \end{array}\right.
 \end{equation}
for all $i=1,\dots,m$ and $j=1,\dots,n$.

The projection onto $C_2$ (resp. $C_3$) can be easily computed by applying the following proposition row-wise (resp. column-wise).

\begin{proposition}
 Let $S:=\{x\in\R^m|\sum_{i=1}^m x_i=1\}$. For any $x\in\R^m$,
  $$P_S(x)= x+ \frac{1}{m}\left(1-\sum_{i=1}^mx_i\right)e, \text{ where }e=[1,1,\dots,1]^T.$$
\end{proposition}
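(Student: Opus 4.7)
The plan is to recognize $S$ as an affine hyperplane and apply the standard formula for the orthogonal projection onto such a hyperplane. Concretely, observe that $S=\{x\in\R^m:\langle e,x\rangle=1\}$, so $S$ is an affine subspace of $\R^m$ with normal direction $e$, where $\|e\|^2=m$.

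First I would recall (or justify in a single line via Lagrange multipliers or a completion-of-squares argument) that for any non-zero $a\in\R^m$ and $b\in\R$, the orthogonal projection onto $H:=\{x\in\R^m:\langle a,x\rangle=b\}$ is given by
\[
P_H(x)\;=\;x+\frac{b-\langle a,x\rangle}{\|a\|^2}\,a.
\]
Applying this with $a=e$, $b=1$ and $\|e\|^2=m$ immediately yields the stated formula.

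Alternatively, and perhaps more transparently, I would verify the formula directly by the characterization of the nearest point in an affine set. Let $p:=x+\frac{1}{m}(1-\sum_{i=1}^m x_i)e$. The key steps are: (i) check $p\in S$, i.e. $\sum_i p_i = \sum_i x_i + (1-\sum_i x_i)=1$; (ii) note that the linear subspace parallel to $S$ is $e^{\perp}=\{y\in\R^m:\sum_i y_i=0\}$; (iii) verify $x-p=-\frac{1}{m}(1-\sum_i x_i)e$ is orthogonal to $e^{\perp}$, which is immediate since $x-p$ is a scalar multiple of $e$. Since $S$ is closed and convex, this orthogonality condition characterizes $P_S(x)$ uniquely, giving the result.

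There is no genuine obstacle here: the only mild point to be careful about is justifying the projection-onto-hyperplane formula if it is not assumed as background, but both routes sketched above make this entirely routine.
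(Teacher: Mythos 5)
Your proposal is correct and follows essentially the same route as the paper, which simply notes that $S=\{x\in\R^m:\langle e,x\rangle=1\}$ and invokes the standard formula for the orthogonal projection onto a hyperplane. Your additional direct verification (membership in $S$ plus orthogonality of $x-p$ to the parallel subspace) is a sound, slightly more self-contained version of the same argument.
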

\begin{proof}
 Since $S=\{x\in\R^n|\langle e,x\rangle=1\}$, the result follows from the standard formula for the orthogonal projection onto a hyperplane (see, for example, \cite[Sec.~4.2.1]{ER11}).
\end{proof}

The projection of $A$ onto $C_4$ is given pointwise by
 $$P_{C_4}(A)_{ij}=\max\{0,A_{ij}\},$$
for $i=1,\dots,m$ and $j=1,\dots,n.$

\begin{remark}
 One can also address the problem of singly-stochastic matrix completion. The problem of row (resp. column) stochastic matrix completion is formulated by dropping constraint $C_3$ (resp. $C_2$). \qede
\end{remark}

\subsection{Euclidean distance matrices}\label{ssec:euc}

A matrix $D=(D_{ij})\in\R^{n\times n}$ is said to be a \emph{distance matrix} if
$$D_{ij}=D_{ji}=\left\{\begin{array}{cl}
=0, &i=j,\\
\geq 0, &i\neq j;\end{array}\right. \text{ for }i,j=1,\dots,n.$$
Furthermore, $D$ is called a \emph{Euclidean distance matrix} (EDM) if there are points $p_1,\ldots, p_n\in \R^r$ (with $r\leq n$) such that
\begin{equation}\label{eq:EDM}
D_{ij}=\|p_i-p_j\|^2 \quad\text{for } i,j=1,\ldots,n.
\end{equation}
If~\eqref{eq:EDM} holds for a set of points in $\R^r$ then $D$ is said to be \emph{embeddable} in $\R^r$. If $D$ is embeddable in $\R^r$ but not in $\R^{r-1}$, then it is said to be \emph{irreducibly embeddable} in $\R^r$.

The following result by Hayden and Wells, based on Schoenberg's criterion~\cite[Th.~1]{S35}, provides a useful characterization of Euclidean distance matrices.

\begin{theorem}[{\cite[Th.~3.3]{HW88}}]\label{th:3.3}
Let $Q$ be the Householder matrix defined by
$$Q:=I-\frac{2vv^T}{v^Tv}, \text{ where }v=\left[1,1,\ldots,1,1+\sqrt{n}\right]^T\in\R^n.$$
Then, a distance matrix $D$ is a Euclidean distance matrix if and only if the ${(n-1)\times(n-1)}$ block $\widehat{D}$ in
\begin{equation}\label{eq:Q-DQ}
Q(-D)Q=\left[\begin{array}{cc}
\widehat{D} & d\\
d^T & \delta\end{array}\right]
\end{equation}
is positive semidefinite. In this case, $D$ is irreducibly embeddable in $\R^r$ where $r=\rank(\widehat{D})\leq n-1$.
\end{theorem}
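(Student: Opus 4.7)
The plan is to reduce Schoenberg's criterion \cite[Th.~1]{S35} to a statement about a principal submatrix of $Q(-D)Q$, by exploiting the fact that the Householder vector $v$ has been engineered precisely so that $Qe$ is aligned with the last standard basis vector $e_n$. Recall Schoenberg's criterion: a distance matrix $D$ is Euclidean if and only if the quadratic form $x\mapsto x^T(-D)x$ is nonnegative on $e^\perp:=\{x\in\R^n:e^Tx=0\}$; equivalently, $J(-D)J\succeq 0$, where $J:=I-\tfrac{1}{n}ee^T$ is the centering projection. Moreover, when $D$ is a Euclidean distance matrix, the minimal embedding dimension $r$ equals $\rank\bigl(-\tfrac{1}{2}JDJ\bigr)$.

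The first concrete step is to evaluate $Qe$. Using $v^Tv=(n-1)+(1+\sqrt{n})^2=2n+2\sqrt{n}$ and $v^Te=(n-1)+(1+\sqrt{n})=n+\sqrt{n}$, one finds $2(v^Te)/(v^Tv)=1$, so
$$Qe=e-\frac{2(v^Te)}{v^Tv}\,v=e-v=-\sqrt{n}\,e_n.$$
Being a Householder reflection, $Q$ is symmetric and orthogonal ($Q^2=I$). Setting $y:=Qx$ gives $e^Tx=(Qe)^Ty=-\sqrt{n}\,y_n$ and $x^T(-D)x=y^TQ(-D)Q\,y$. In particular, $x\in e^\perp$ iff $y_n=0$, so the restriction of $x\mapsto x^T(-D)x$ to $e^\perp$ coincides with the quadratic form of the top-left block $\widehat{D}$ acting on $(y_1,\ldots,y_{n-1})$. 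Schoenberg's criterion then yields the stated equivalence.

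For the rank and embedding-dimension assertion, observe that the same change of basis diagonalises $J$: since $QJQ=Q^2-\tfrac{1}{n}(Qe)(Qe)^T=I-e_ne_n^T=\diag(1,\ldots,1,0)$, conjugating $J(-D)J$ by $Q$ gives
$$Q\bigl(J(-D)J\bigr)Q=\diag(1,\ldots,1,0)\,Q(-D)Q\,\diag(1,\ldots,1,0)=\begin{bmatrix}\widehat{D} & 0\\ 0 & 0\end{bmatrix}.$$
Since $Q$ is invertible, $\rank(\widehat{D})=\rank\bigl(J(-D)J\bigr)=\rank\bigl(-\tfrac{1}{2}JDJ\bigr)$, which by Schoenberg equals $r$; evidently $r\leq n-1$.

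The main obstacle is simply recognising the right viewpoint. Once one sees that $v$ has been chosen so that $Qe\propto e_n$, both halves of the theorem reduce to routine bookkeeping in the rotated basis; the only piece of content is the identity $Qe=-\sqrt{n}\,e_n$, which is what pins down the otherwise mysterious entry $1+\sqrt{n}$.
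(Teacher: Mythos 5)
Your argument is correct: the computation $Qe=e-v=-\sqrt{n}\,e_n$, the change of variables $y=Qx$ identifying $e^{\perp}$ with $\{y:y_n=0\}$, and the identity $QJQ=I-e_ne_n^T$ together reduce both the positive-semidefiniteness claim and the rank/embedding-dimension claim to Schoenberg's criterion exactly as intended. The paper itself offers no proof --- it simply cites \cite[Th.~3.3]{HW88} and notes that the result is ``based on Schoenberg's criterion'' --- so your write-up is a faithful, self-contained reconstruction of the route the cited source takes rather than a genuinely different one.
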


\begin{remark}
As a consequence of Theorem~\ref{th:3.3}, the set of Euclidean distance matrices is convex.  \qede
\end{remark}

Let us consider now the matrix completion problem where only some entries of a Euclidean distance matrix $D$ are known, and denote by $\Omega$ the location of these entries (i.e., $(i,j)\in\Omega$ if $D_{ij}$ is known). Without loss of generality we assume $D$ and $\Omega$ to be symmetric. Consider the convex sets
\begin{align}
C_1:=&\big\{X\in\R^{n\times n}\mid X\text{ is a distance matrix}, X_{ij}=D_{ij}\text{ for all }(i,j)\in\Omega \big\},\label{eq:EDM1}\\
C_2:=&\big\{X\in\R^{n\times n}\mid \widehat{X}\succeq 0\text{ where }\widehat{X} \text{ is the block in }Q(-X)Q \text{ in }\eqref{eq:Q-DQ}\big\}\label{eq:EDM2}
\end{align}
Then $X$ is a Euclidean distance matrix that completes $D$ if and only if $X\in C_1\cap C_2$.

The projection of any symmetric matrix $A=(A_{ij})\in\R^{n\times n}$ onto $C_1$ can be easily computed:

\begin{equation}\label{eq:EDM_PC1}
P_{C_1}(A)=\left\{\begin{array}{cl}
0,&\text{if } i=j,\\
D_{ij},&\text{if }(i,j)\in\Omega,\\
\max\{0,A_{ij}\},&\text{otherwise;}\end{array}\right.
\end{equation}
The projection of $A$ onto $C_2$ is the unique solution to the problem
$$\min_{X\in C_2} \|A-X\|_F.$$
If we denote
$$Q(-A)Q=\left[\begin{array}{cc}
\widehat{A} & a\\
a^T & \alpha\end{array}\right]\quad\text{and}\quad Q(-X)Q=\left[\begin{array}{cc}
\widehat{X} & x\\
x^T & \lambda\end{array}\right],$$
then
\begin{align*}
\min_{X\in C_2} \|A-X\|_F&=\min_{X\in C_2} \|Q(A-X)Q\|_F=\min_{X\in C_2} \|Q(-A)Q-Q(-X)Q\|_F\\
&=\min_{x\in\R^n,\lambda\in\R\atop\widehat{X}=\widehat{X}^T, \widehat{X}\succeq 0} \left\|\begin{array}{cc}
\widehat{A}-\widehat{X} & a-x\\
(a-x)^T & (\alpha-\lambda)\end{array}\right\|_F.\end{align*}
A consequence of~\cite[Th.~2.1]{HW88} is that
the unique best approximation
is given by
$$\left[\begin{array}{cc}
U\Lambda_+U^T & a\\
a^T & \alpha\end{array}\right]$$
where $U\Lambda U^T=\widehat{A}$ is the spectral decomposition (see \cite[p.116]{HW88}) of $\widehat{A}$, with $\Lambda=\diag(\lambda_1,\ldots,\lambda_{n-1})$, and $\Lambda_+=\diag(\max\{0,\lambda_1\},\ldots,\max\{0,\lambda_{n-1}\})$.
Therefore,
\begin{equation}\label{eq:EDM_PC2} P_{C_2}(A)=-Q\left[\begin{array}{cc}
U\Lambda_+U^T & a\\
a^T & \alpha\end{array}\right]Q.
\end{equation}

\subsubsection{Noise}

In many practical situations the distances that are initially known have some errors in their measurements, and the Euclidean matrix completion problem may not even have a solution. In these situations, a model that allows errors in the distances needs to be considered.

Given some error $\eps\geq 0$, consider the convex set
\begin{align}
C_1^\eps:=\big\{X\in\R^{n\times n}\mid & \,X\text{ is a distance matrix}\nonumber\\
&\text{ and } |X_{ij}-D_{ij}|\leq\eps\text{ for all }(i,j)\in\Omega \big\}.\label{eq:EDM1eps}
\end{align}
Notice that $C_1^0=C_1$. The projection of any symmetric matrix $A=(A_{ij})\in\R^{n\times n}$ onto $C_1^\eps$ can be easily computed:

\begin{equation}
P_{C_1^\eps}(A)=\left\{\begin{array}{cl}
0,&\text{if } i=j,\\
D_{ij}+\eps,&\text{if }(i,j)\in\Omega\text{ and }A_{ij}>D_{ij}+\eps,\\
\max\{0,D_{ij}-\eps\},&\text{if }(i,j)\in\Omega\text{ and }A_{ij}<D_{ij}-\eps,\\
\max\{0,A_{ij}\},&\text{otherwise.}\end{array}\right.
\end{equation}
This model could be easily modified to include a different upper and lower bound on each distance $D_{ij}$ for $(i,j)\in\Omega$.

\section{Non-convex Problems}\label{sec:nonconv}

We now turn to the more difficult case of non-convex matrix completion problems.

\subsection{Low-rank matrices}

It many practical scenarios, one would like to recover a matrix that is known to be low-rank from only a subset of its entries. This is the case, for example, in various compressed sensing applications \cite{BL11}. The main problem here is that the low-rank constraint makes the problem non-convex. For example, if we consider
$$S:=\big\{A\in\R^{2\times 2}\big|\rank(A)\leq 1\big\},$$
then
$$\left[\begin{array}{cc}
1 & 0\\
0 & 0\end{array}\right],
\left[\begin{array}{cc}
0 & 0\\
0 & 1\end{array}\right]\in S,$$
but for all $\lambda\in(0,1)$,
$$\lambda\left[\begin{array}{cc}
1 & 0\\
0 & 0\end{array}\right]+(1-\lambda)
\left[\begin{array}{cc}
0 & 0\\
0 & 1\end{array}\right]\not\in S.$$

\subsubsection{Relaxed rank constraints}

Let us consider the problem of finding a matrix of minimal rank, given that some of the entries are known. We define a \emph{relaxed rank constraint}
\begin{equation*}
 C_2^r:=\{X\in\R^{m\times n}|\rank(X)\leq r\}.
\end{equation*}
Then $X$ is a matrix completion of $A$ with rank at most $r$ if and only if $X\in C_1\cap C_2^r$.

The set of possible ranks of $A$ is finite and bounded above by $\min\{m,n\}$. Furthermore, $C_2^r\subseteq C_2^s$ for $r\leq s$. It follows that $X$ is a completion of $A$ with minimal rank if and only if
 $$X\in C_1\cap C_2^{r_0} \text{ and }X\not\in C_1\cap C_2^r\text{ for any }r<r_0.$$
In this case $\rank(X)=r_0$.

This suggests a \emph{binary search} heuristic for finding the rank of a matrix. For convenience, abbreviate the Douglas--Rachford method by DR, and denote by $P^{(r)}$ the relaxation
\begin{equation}
 \text{Find }x\in C_1\cap C_2^r.
\end{equation}
The iteration can now be implemented as shown as Algorithm \ref{alg:relrank}:

\medskip
\begin{algorithm}[H]\label{alg:relrank}
 \caption{Heuristic for minimum rank matrix completion.}
 \SetKwInOut{Input}{input}
 \SetKwInOut{Output}{output}
 \Input{$\Omega$, $A_{ij}$ for all $(i,j)\in\Omega$, $MaxIterations$}
 $r_{lb}:=0,r_{ub}:=\min\{m,n\},r:=\lfloor r_{ub}/2\rfloor$\;
 \While{$r_{lb}<r_{ub}$}{
   \uIf{DR solves $P^{(r)}$ within $MaxIterations$ iterations }{
     $r_{ub}:=r$\;
   }
   \Else{
     $r_{lb}:=r+1$\;
   }
   $r:=\lfloor (r_{lb}+r_{ub})/2 \rfloor$\;
 }
 \Output{$r$}
\end{algorithm}
\medskip

Of course there are many applicable variants on this idea. For instance, one could instead perform a ternary search.

\subsection{Low-rank Euclidean distance matrices}

In many situations, the Euclidean distance matrix $D$ that one aims to complete is known to be embeddable in $\R^r$, say with $r=3$. This is the case, for example, in the molecular conformation problem in which one would like to compute the relative atom positions within a molecule. Nuclear magnetic resonance spectroscopy can be employed to measure short range interatomic distances (i.e. those less than 5--6\AA)\footnote{1\AA\ = $10^{-10}$ meters. The \AA\ stands for \emph{\AA ngstr\"om}.} without structural modification of the molecule (see~\cite{Yuen10}).

These types of problems are known as \emph{low-rank Euclidean distance matrix} problems. For any given positive integer $r$, we can modify the set $C_2$ in~\eqref{eq:EDM2} as follows
\begin{align*}
C_2^r:=\big\{X\in\R^{n\times n}\mid& \,\widehat{X}\succeq 0\text{ where }\widehat{X} \text{ is the block}\\
&\text{ in }Q(-X)Q \text{ in }\eqref{eq:Q-DQ}\text{ and }\rank(\widehat{X})\leq r\big\}.
\end{align*}
Unfortunately, as noted in~\cite[\S5.3]{GM89}, the set $C_2^r$ is no longer convex unless $r\geq n-1$ (in which case the rank condition is always satisfied and $C_2^r=C_2$). Nevertheless, a projection\footnote{Since $C_2^r$ is not convex, the projection need not be unique.} of any symmetric matrix $A$ onto $C_2^r$ can be easily computed. Indeed, let us assume without loss of generality that the eigenvalues $\lambda_1,\ldots,\lambda_{n-1}$ of the submatrix $\widehat{X}$ are given in nondecreasing order $\lambda_1\leq\lambda_2\leq\ldots\leq\lambda_{n-1}$ in the spectral decomposition $\widehat{X}=U\Lambda U^T$, where $\Lambda=\diag(\lambda_1,\ldots,\lambda_{n-1})$. Then, $P_{C_2^r}(X)$ can be computed as in~\eqref{eq:EDM_PC2} but with $\Lambda_+$ replaced by $$\Lambda_+^r:=\diag(0,\ldots,0,\max\{0,\lambda_{n-r}\},\ldots,\max\{0,\lambda_{n-1}\}).$$

\subsection{Protein reconstruction} \label{sec:prot}
Once more, despite the absence of convexity in one of the constraints, we have observed the Douglas--Rachford algorithm to converge. Computational experiments have been performed on various protein molecules obtained from the \emph{RCSB Protein Data  Bank}.\footnote{Available at  \url{http://www.rcsb.org/pdb/}.} The complete structure of these proteins is contained in the respective data files as a list of points in $\R^3$, each representing an individual atom. The corresponding complete Euclidean distance matrix can then be computed using (\ref{eq:EDM}). A realistic partial Euclidean distance matrix is then obtained by removing all entries which correspond to distances greater than 6\AA. From this partial matrix, we seek to reconstruct the molecular conformation.

In Algorithm \ref{algor:DRprotein} we give details regarding our \emph{Python} implementation for finding the distance matrix and in Algorithm \ref{alg:points} we reconstruct the positions from the matrix completion.
 \medskip

\begin{algorithm}[H]
 \caption{Douglas--Rachford component of our Python implementation.}\label{algor:DRprotein}
 \SetAlgoLined
 \SetKwInOut{Input}{input}
 \SetKwInOut{Initialize}{initialize}
 \SetKwInOut{Output}{output}
 \Input{$D\in\R^{n\times n}$ (the partial Euclidean distance matrix)}
 $\epsilon:=0.1,\,r:=3,\,N:=5000,\,k:=0$\;
 $X:=(Y+Y^T)/2\in S_n$ for random $Y\in[-1,1]^{n\times n}$\;
 \While{$k\leq N$}{
  $X:=T_{C_1^\epsilon,C_2^r}X$\;
  $k:=k+1$\;
 }
 \Output{$X$ (the reconstructed Euclidean distance matrix)}
\end{algorithm}

\bigskip

The quality of the solution is then assessed using various error measurements. The \emph{relative error}, reported in \emph{decibels} (dB), is given by
  $$\text{Relative error}:=10\log_{10}\left(\frac{\|P_{C_2^r}P_{C_1^\epsilon}X_N-P_{C_1^\epsilon}X_N\|_F^2}{\|P_{C_1^\epsilon}X_N\|_F^2}\right), \text{ where }\epsilon=0.1,r=3.$$

\begin{algorithm}[H]\label{alg:points}
 \caption{Converting a Euclidean distance matrix to points in $\R^n$ (see {\cite[Sec.~5.12]{BV04}}).}

 \SetKwInOut{Input}{input}
 \SetKwInOut{Output}{output}
 \Input{$X\in\R^{n\times n}$ (the reconstructed distances matrix)}
 $L:=I-ee^T/n$ where $e=(1,1,\dots,1)^T$\;
 $\tau:=-\frac{1}{2}LDL$\;
 $USV^T:=SingularValueDecomposition(\tau)$\;
 $Z:=$first $n$ columns of $U\sqrt{S}$\;
 $p_i:=i$th row of $Z$ for $i=1,2,\dots,n$\;
 \Output{$p_1,p_2,\dots,p_n$ (positions of the points in $\R^n$)}
 \end{algorithm}

\medskip

Let $p_1,p_2,\dots,p_n\in\R^3$ denote the positions of the $n$ atoms obtained from the distance matrix $X_N$, and let $p_1^{true},p_2^{true},\dots,p_n^{true}$ denote the true positions of the $n$ atoms (both relative to the same coordinate system). It is possible for both sets of points to represent the same molecular conformation without occupying the same positions in space. Thus, to compare the two sets, a \emph{Procrustes analysis} is performed.\footnote{This can be performed, for example, using build-in MATLAB functions.} That is, we (collectively) translate, rotate and reflect the points $p_1,p_2,\dots,p_n$ to obtain the point $\hat p_1,\hat p_2,\dots,\hat p_n$ which minimize the least squared error to the true positions.

Using the fitted points, we compute the \emph{root-mean-square error (RMSE)} defined by
 $$\text{RMSE}:=\sqrt{\frac{1}{\text{\# of atoms}}\sum_{i=1}^m\|\hat p_i-p_i^{true}\|_2^2},$$
and the \emph{maximum error} defined by
 $$\text{Max Error}:=\max_{1\leq i\leq m}\|\hat p_i-p_i^{true}\|_2.$$

\begin{table}[htbp]
 \begin{center}
 \caption{Six Proteins: average (maximum) errors from five replications (5000 iterations).}\label{table1}
 \begin{tabular}{ccccc}\hline
   Protein & \# Atoms & Relative Error (dB) & RMSE & Max Error \\ \hline
   1PTQ    & 404  & -83.6 (-83.7) & 0.0200 (0.0219)  & 0.0802 (0.0923)  \\
   1HOE    & 581  & -72.7 (-69.3) & 0.191 (0.257)    & 2.88 (5.49)      \\
   1LFB    & 641  & -47.6 (-45.3) & 3.24 (3.53)      & 21.7 (24.0)      \\
   1PHT    & 988  & -60.5 (-58.1) & 1.03 (1.18)      & 12.7 (13.8)      \\
   1POA    & 1067 & -49.3 (-48.1) & 34.1 (34.3)      & 81.9 (87.6)       \\
   1AX8    & 1074 & -46.7 (-43.5) & 9.69 (10.36)     & 58.6 (62.6)       \\ \hline
 \end{tabular}
 \end{center}
\end{table}

Our computational results  are summarized in Table~\ref{table1}. An animation of the algorithm at work constructing the protein  1PTQ can be viewed at  \url{http://carma.newcastle.edu.au/DRmethods/1PTQ.html}. We next make some general comments regarding the performance of our method.

\begin{itemize}
 \item 1PTQ and 1HOE, the two proteins with less than 600 atoms, could be reliability reconstructed to within a small error. A visual comparison of the reconstructed and original molecules match well -- they are indistinguishable. See Figures~\ref{fig:1PTQ} and \ref{fig:proteins}.
 \item The reconstructions of 1LFB and 1PHT, the next two smallest proteins examined, were both satisfactory although not as good as their smaller counterparts. A careful comparison of the original and reconstructed images in Figure~\ref{fig:proteins}, shows that a large proportion of the proteins have been faithfully reconstructed, although some finer details are missing. For instance, one should look at the top right corners of the 1PHT images.
 \item The reconstructions of 1POA and 1AX8, the largest two proteins examined, were poor. The  images of the reconstructed proteins show that some bond lengths are abnormally large. We discuss possible approaches to this issue in Remarks~\ref{remark:protein1} and \ref{remark:protein2}.
 \item Some alternative approaches to protein reconstruction are reported in \cite{ALA12}. Three are:
  \begin{itemize}
   \item A ``build-up" algorithm  placing atoms sequentially (\emph{Buildup}).
   \item A classical multidimensional scaling approach (\emph{CMDSCALE}).
   \item Global continuation on Gaussian smoothing of the error function (\emph{DGSOL}).
  \end{itemize}
  For 1PTQ and HOE, the RMS error of the Douglas--Rachford reconstruction was slightly smaller than the reconstruction obtained from either the buildup algorithm or CMDSCALE. For 1LFB and 1PHT the RMS errors were comparable, and for 1POA and 1AXE they still had the same order of magnitude. DGSOL performed better than all three approaches (Douglas--Rachford, Buildup and CMSCALE).
 \item For the proteins examined, computational times for the full 5000 iterations, except for 1POA,  ran anywhere from 6 to 18 hours. This time is mostly consumed by eigen-decompositions performed as part of computing $P_{C_2^r}$ and could perhaps be dramatically reduced by using a cheaper approximate projection.  For 1POA we used up to 50 hours for a full reconstruction.
\end{itemize}

\begin{sidewaysfigure}[h]
  \begin{center}

  \begin{subfigure}{0.19\textwidth}
  \includegraphics[width=\textwidth]{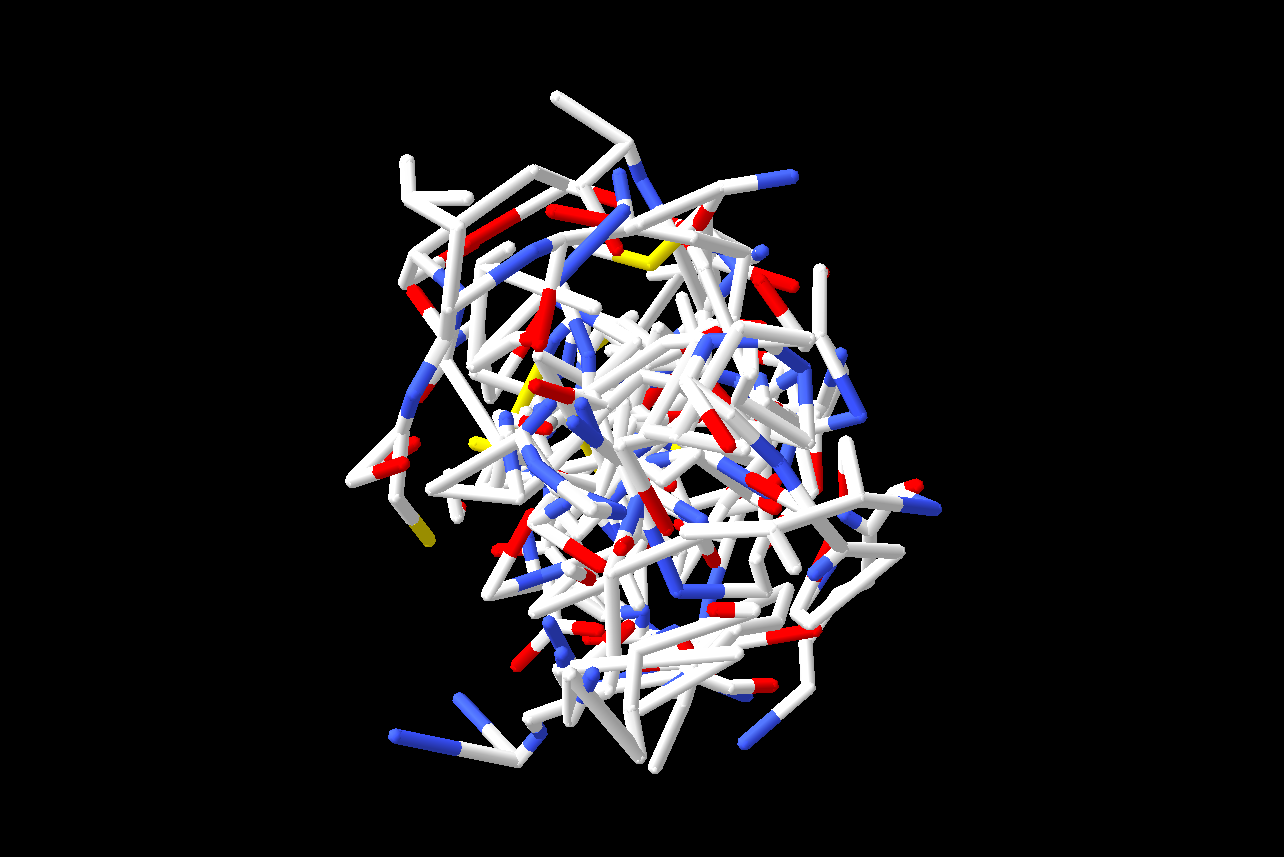}\caption{500 steps, -25 dB.}
  \end{subfigure}
  \begin{subfigure}{0.19\textwidth}
  \includegraphics[width=\textwidth]{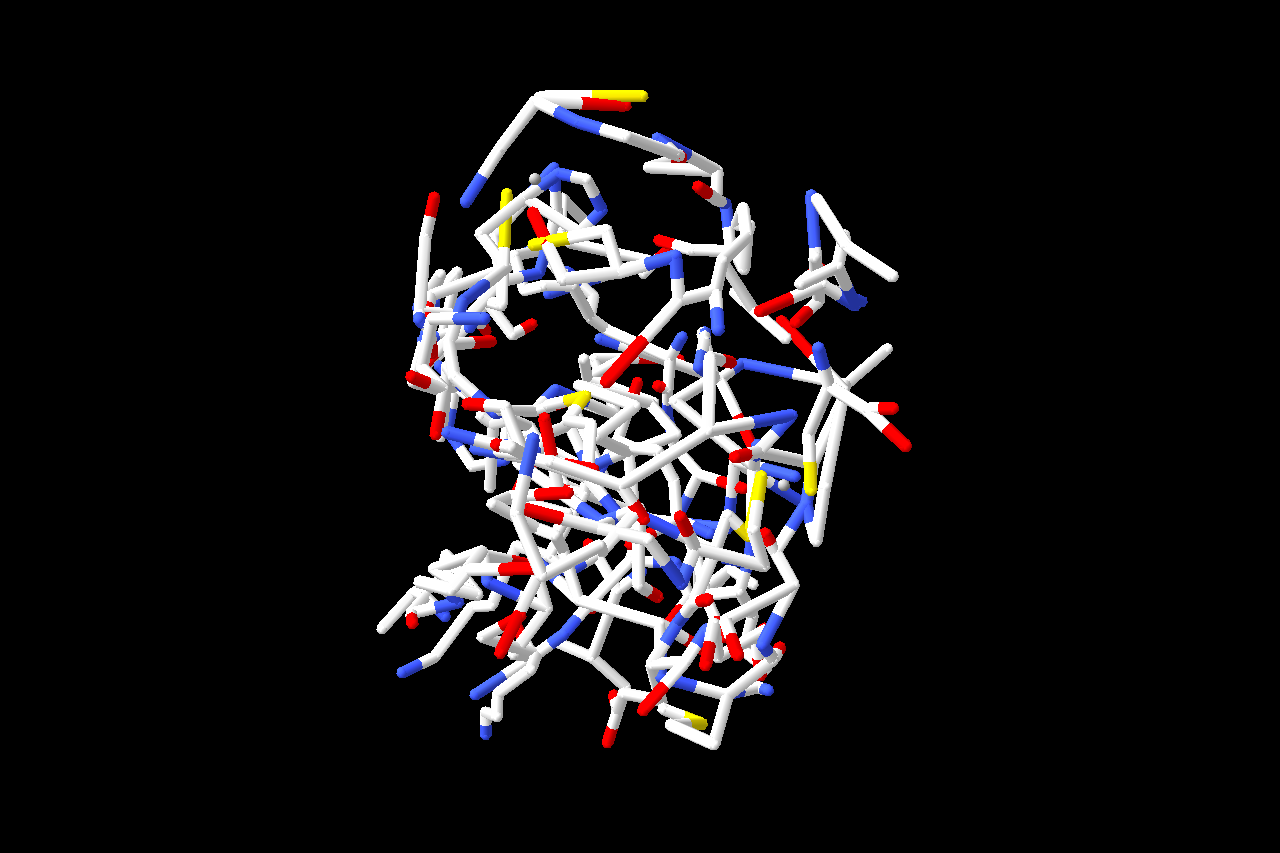}\caption{1,000 steps, -30 dB.}
  \end{subfigure}
  \begin{subfigure}{0.19\textwidth}
  \includegraphics[width=\textwidth]{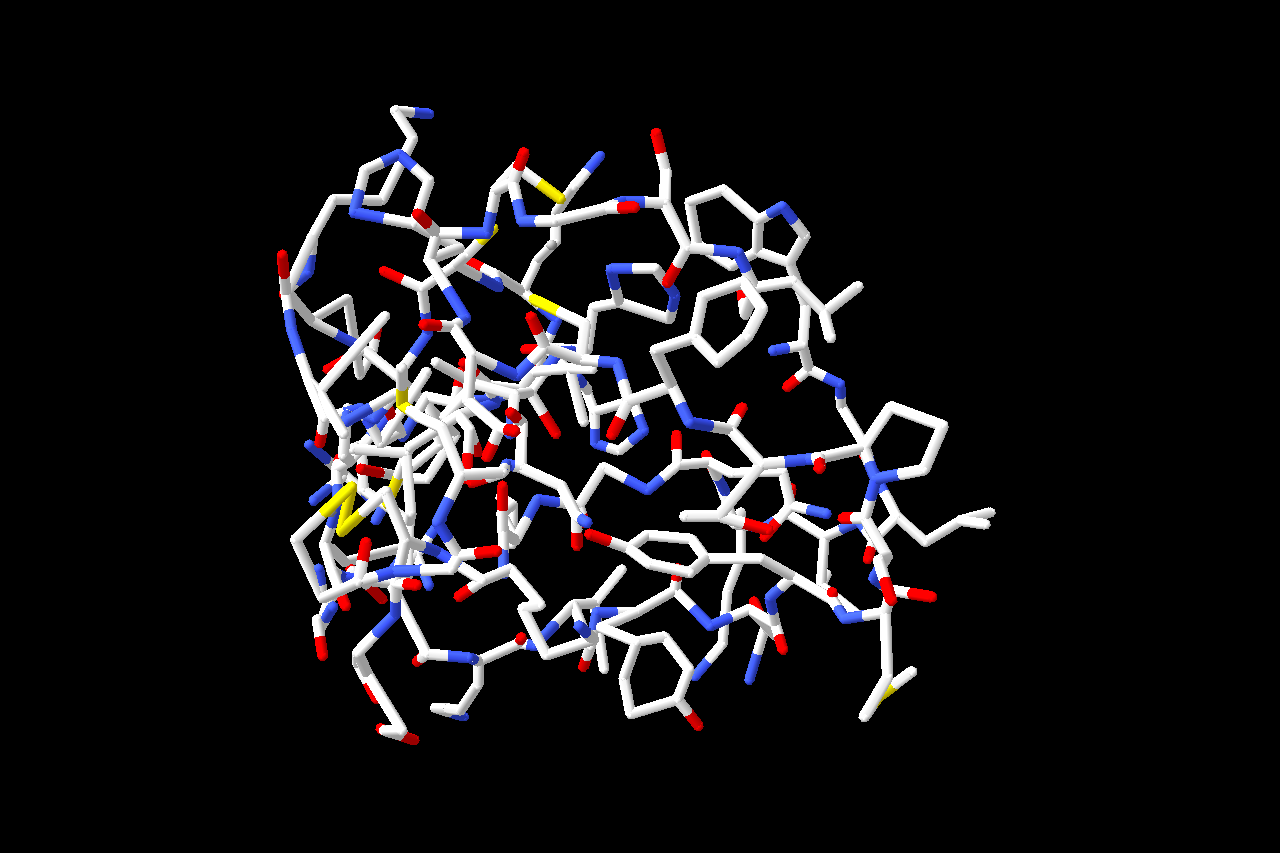}\caption{1,500 steps, -37 dB.}
  \end{subfigure}
  \begin{subfigure}{0.19\textwidth}
  \includegraphics[width=\textwidth]{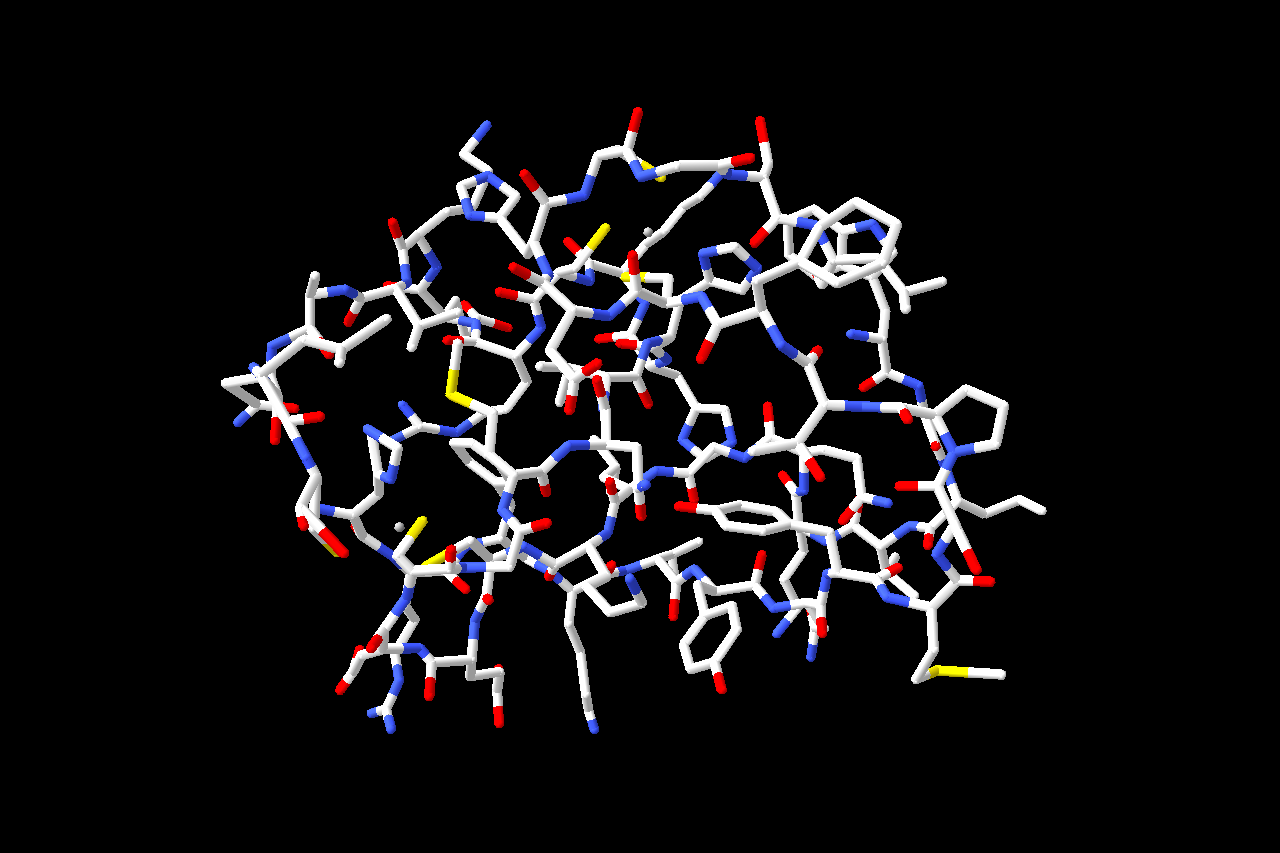}\caption{2,000 steps, -51 dB.}
  \end{subfigure}
  \begin{subfigure}{0.19\textwidth}
  \includegraphics[width=\textwidth]{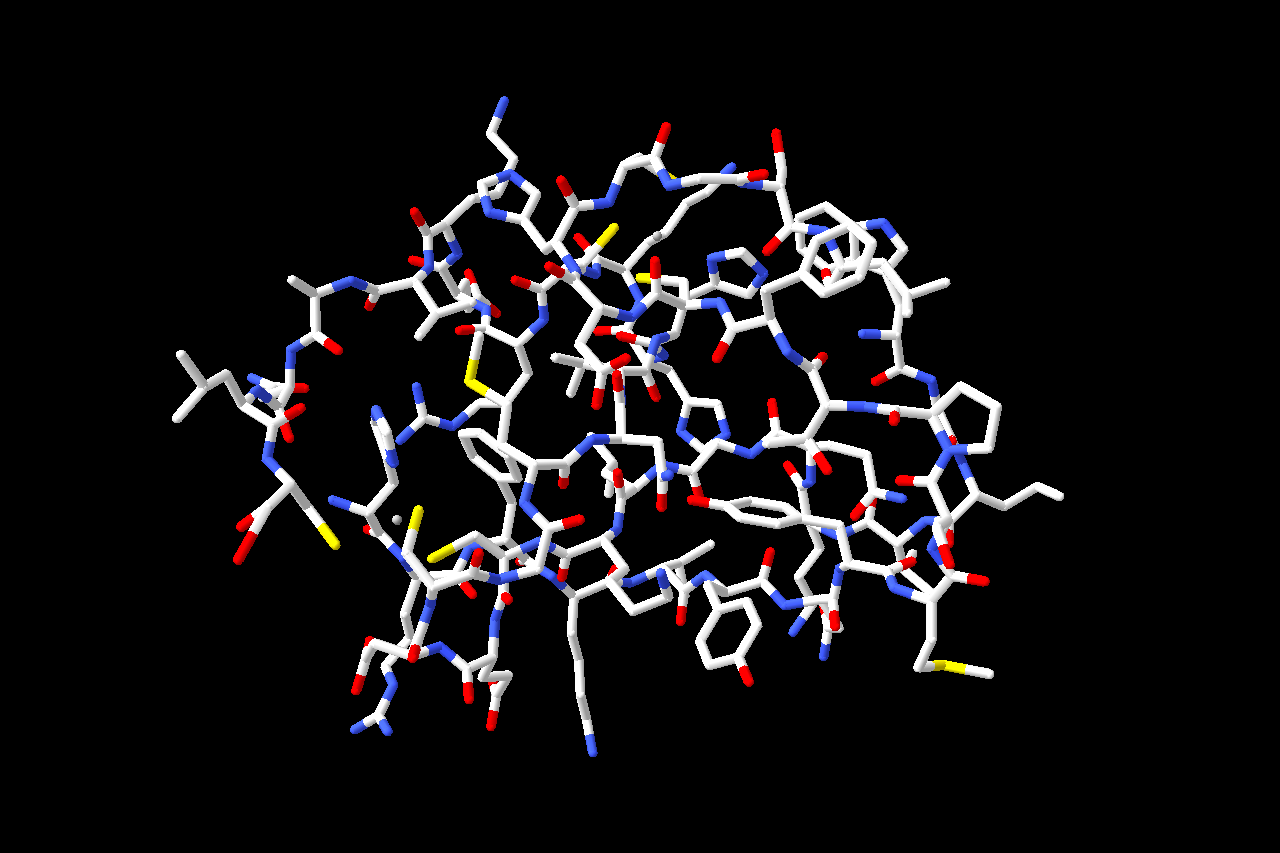}\caption{5,000 steps, -84 dB.}
  \end{subfigure}

  \bigskip

   \begin{subfigure}{0.19\textwidth}
  \includegraphics[width=\textwidth]{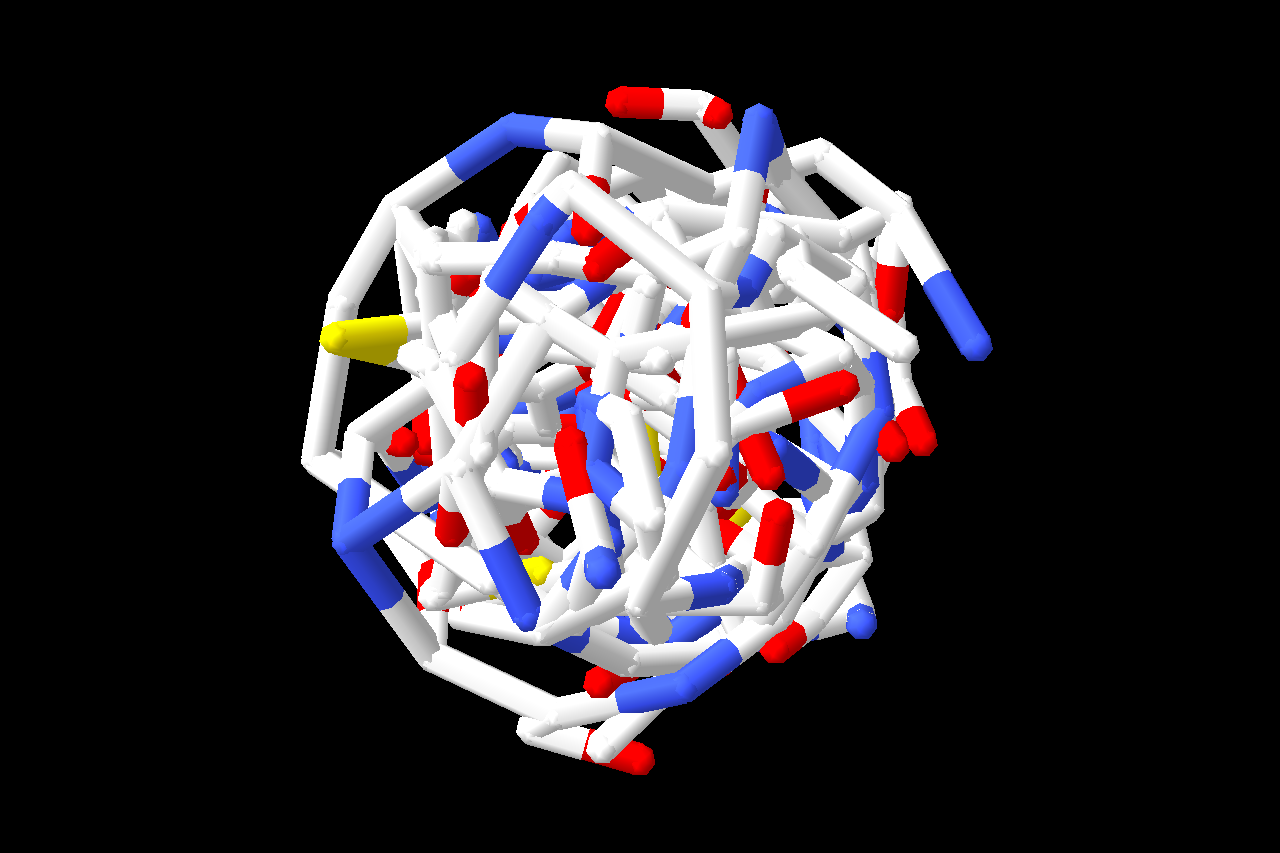}\caption{500 steps, -22 dB.}
  \end{subfigure}
  \begin{subfigure}{0.19\textwidth}
  \includegraphics[width=\textwidth]{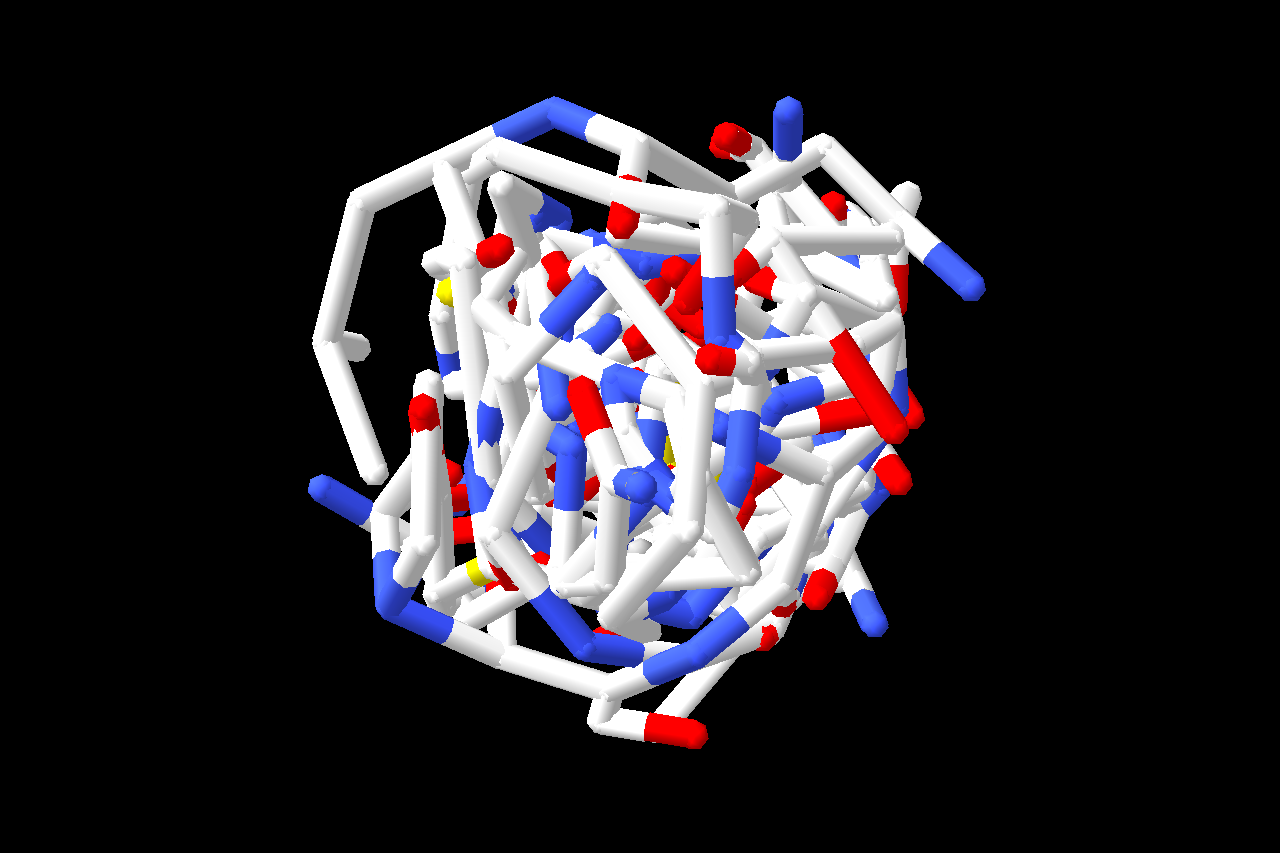}\caption{1,000 steps, -24 dB.}
  \end{subfigure}
  \begin{subfigure}{0.19\textwidth}
  \includegraphics[width=\textwidth]{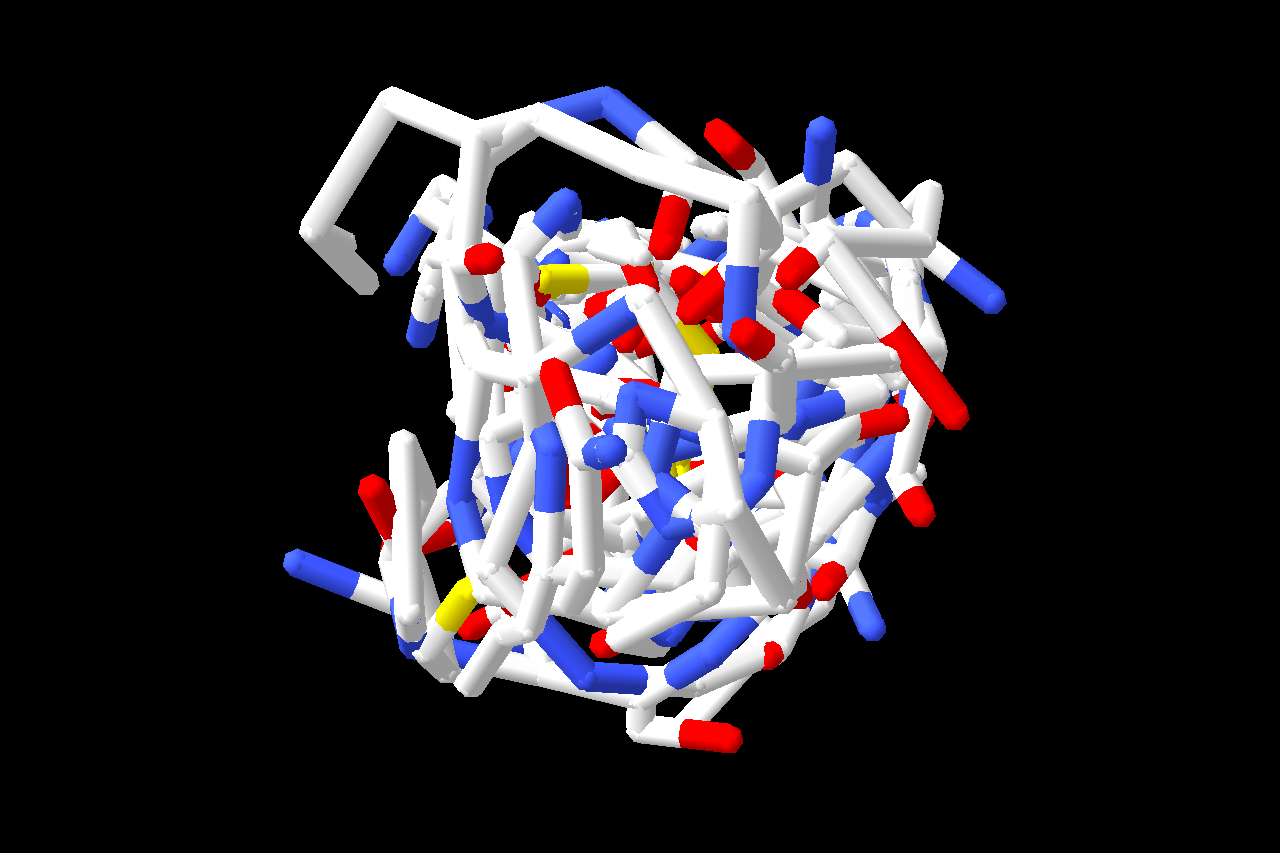}\caption{1,500 steps, -25 dB.}
  \end{subfigure}
  \begin{subfigure}{0.19\textwidth}
  \includegraphics[width=\textwidth]{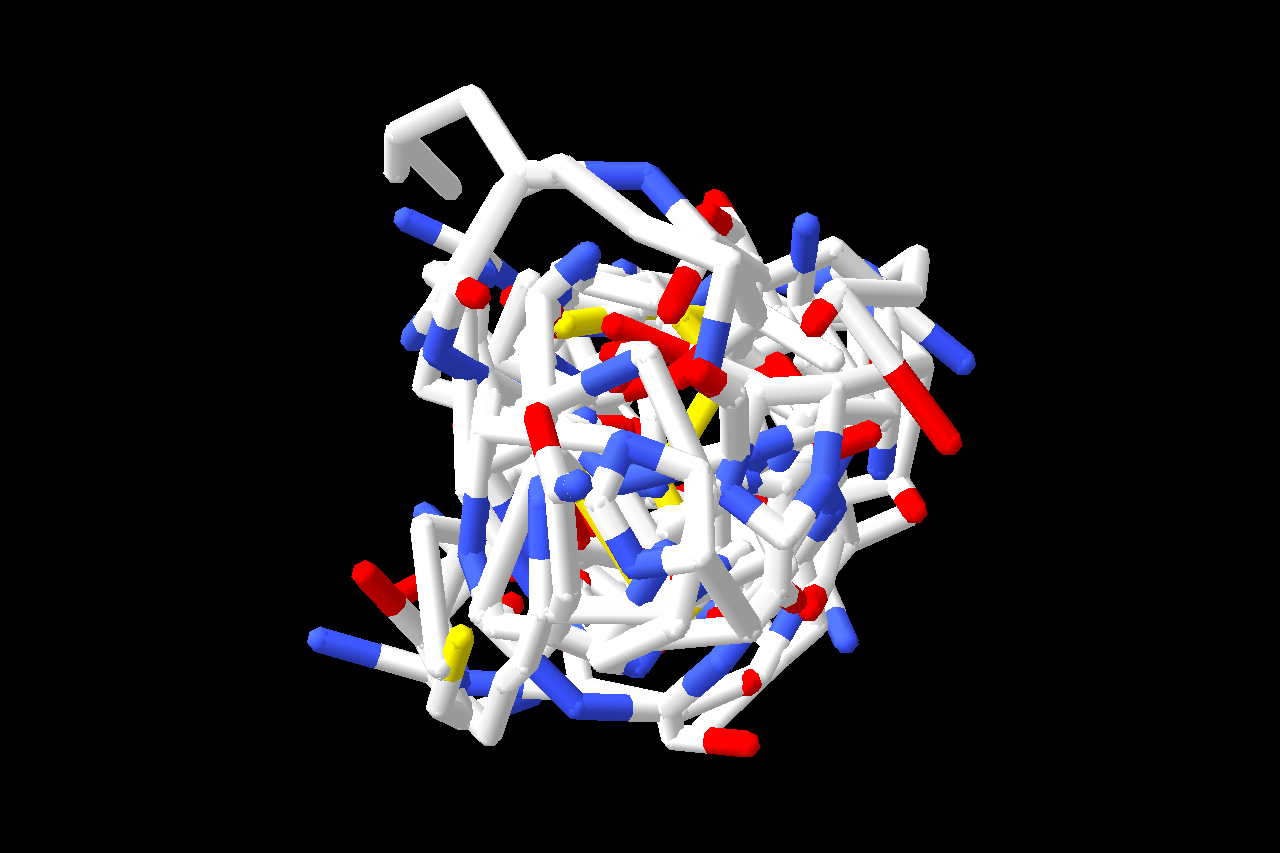}\caption{2,000 steps, -25 dB.}
  \end{subfigure}
  \begin{subfigure}{0.19\textwidth}
  \includegraphics[width=\textwidth]{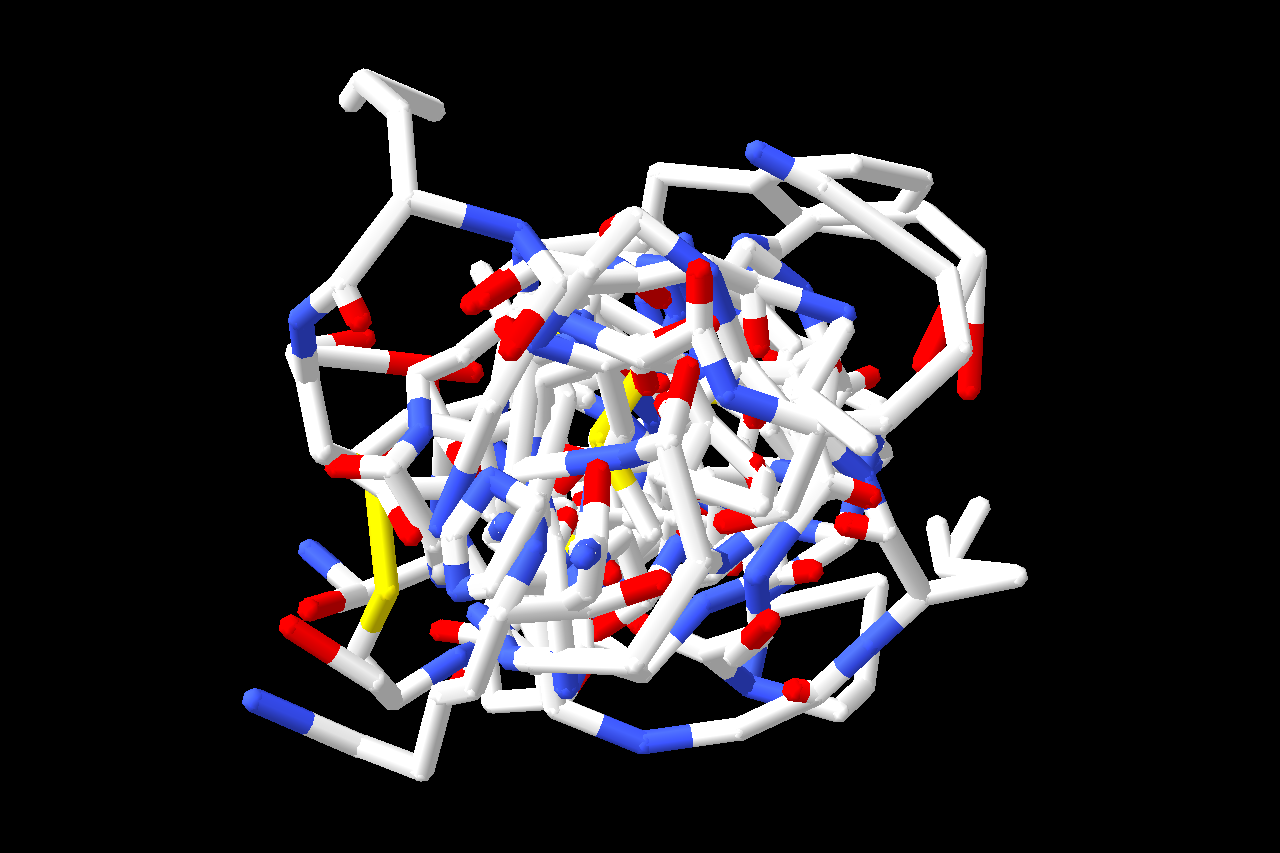}\caption{5,000 steps, -28 dB.}
  \end{subfigure}

  \bigskip

  \begin{subfigure}{0.19\textwidth}
  \includegraphics[width=\textwidth]{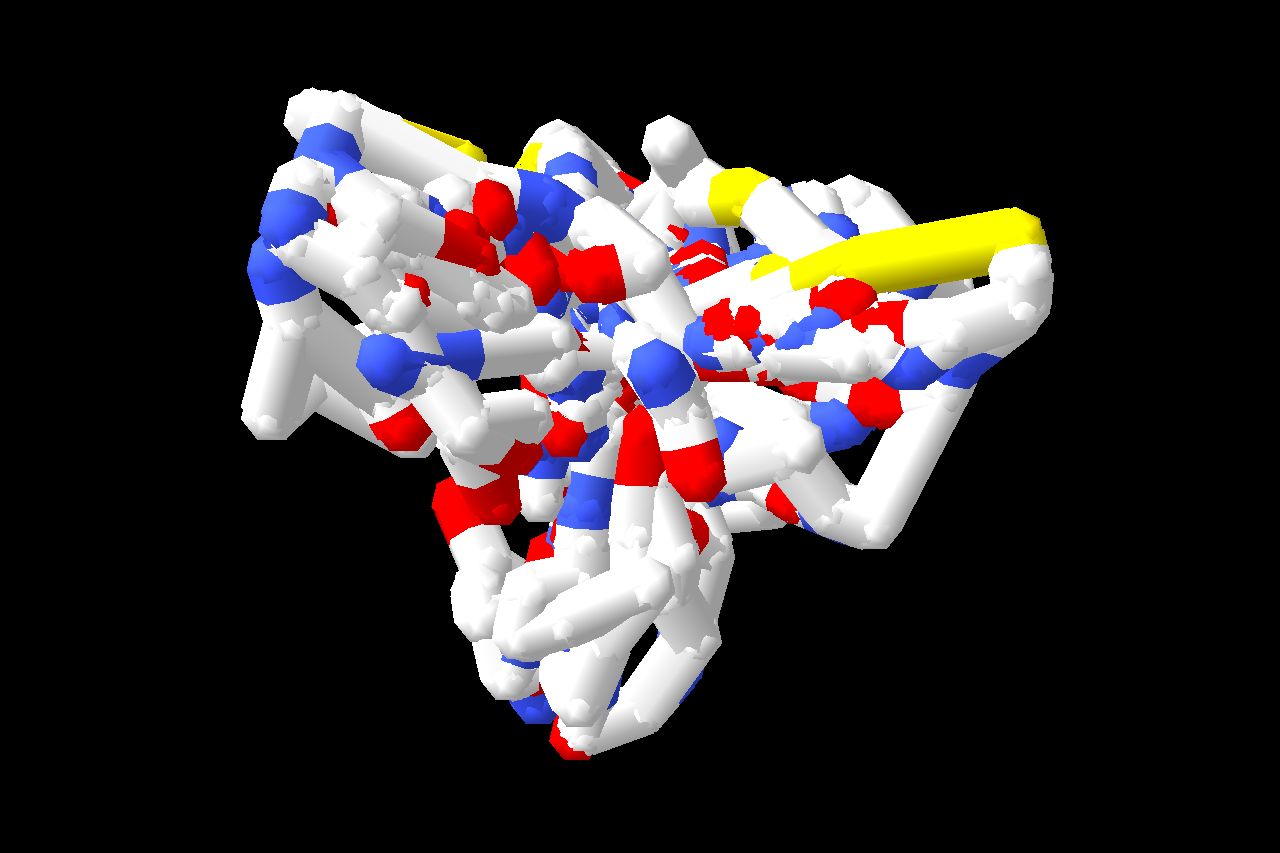}\caption{Before.}
  \end{subfigure}
  \qquad
  \begin{subfigure}{0.19\textwidth}
  \includegraphics[width=\textwidth]{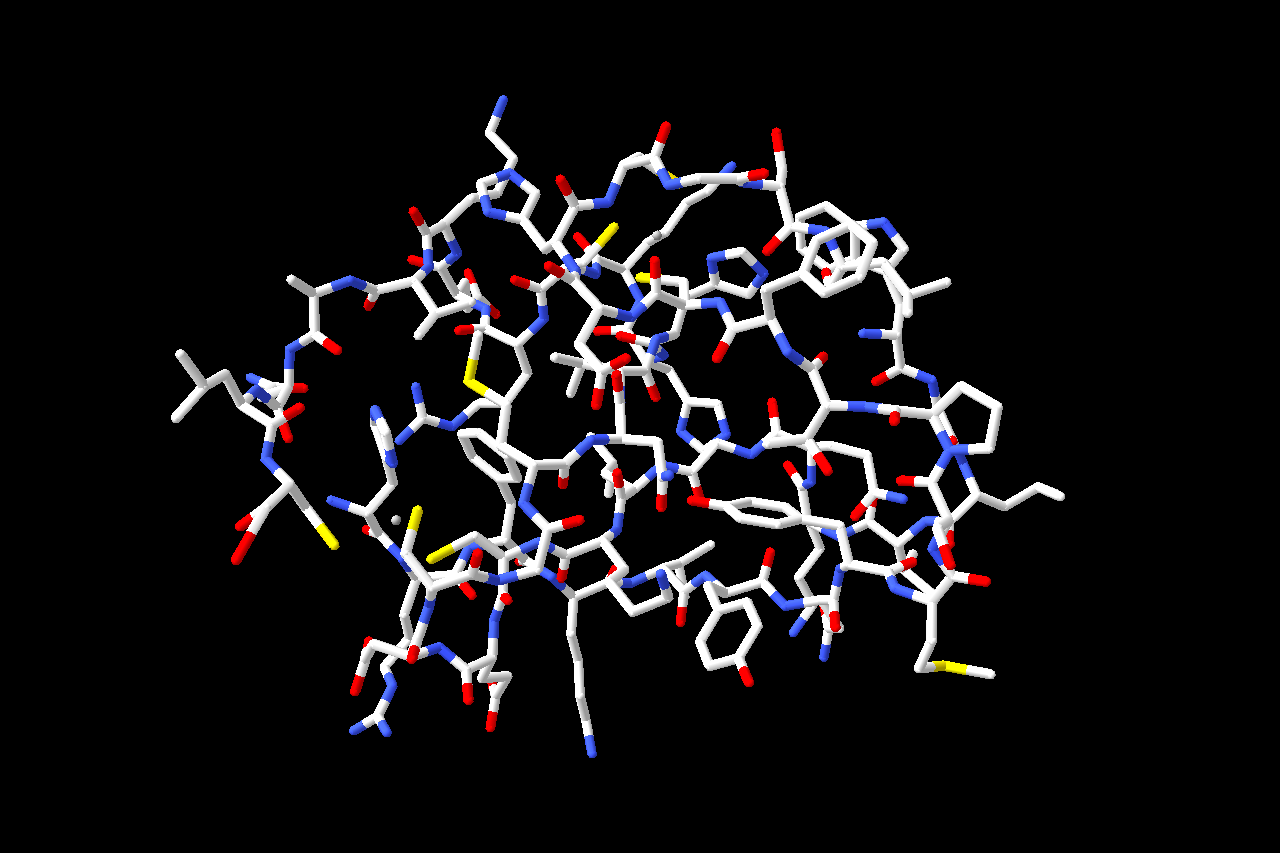}\caption{Original.}
  \end{subfigure}
  \qquad
  \begin{subfigure}{0.19\textwidth}
  \includegraphics[width=\textwidth]{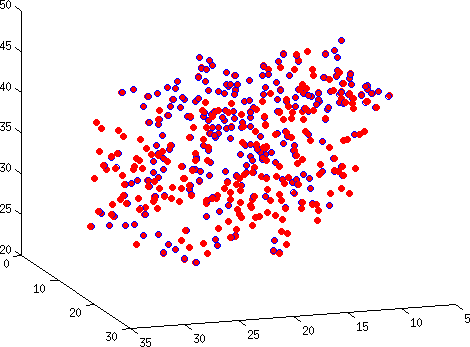}\caption{Atom positions.}
  \end{subfigure}

  \caption{Reconstructions (displayed in Swiss-PdbViewer) of the protein 1PTQ obtained from the Douglas--Rachford algorithm (a)--(e) and from the method of cyclic projections (f)--(j), together with their relative errors  after given numbers of steps. The protein prior to the reconstruction is shown in (k) and its actual structure in (l). Only interatomic distances below 6\AA\ have been used as input. This represents 14,370/162,812 distances (i.e. 8.83\% of the nonzero entries of the distance matrix). Entry (m) shows the positions of the original (resp. reconstructed) atoms in red (resp. blue) -- coincidence is frequent.} \label{fig:1PTQ}
  \end{center}
\end{sidewaysfigure}

\begin{figure}
  \begin{adjustwidth}{-0.5in}{-0.5in} 
  \begin{center}
    \caption{The five proteins not shown in Figure~\ref{fig:1PTQ}. The first column shows positions of  original (resp. reconstructed) atom in red (resp. blue), the second and third columns show the original protein and a reconstructioned instance (displayed in Swiss-PdbViewer),  as reported in Table~\ref{table1}.}\label{fig:proteins}
    \begin{tabular}{|c|c|c|c|} \hline
    Protein & Atom positions & Original & Reconstruction \\ \hline
    \rotatebox{90}{\hspace{32pt}1HOE}
              & \includegraphics[width=0.32\textwidth]{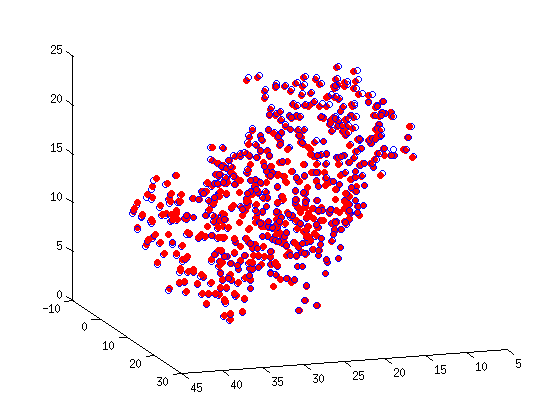}
              & \includegraphics[width=0.32\textwidth]{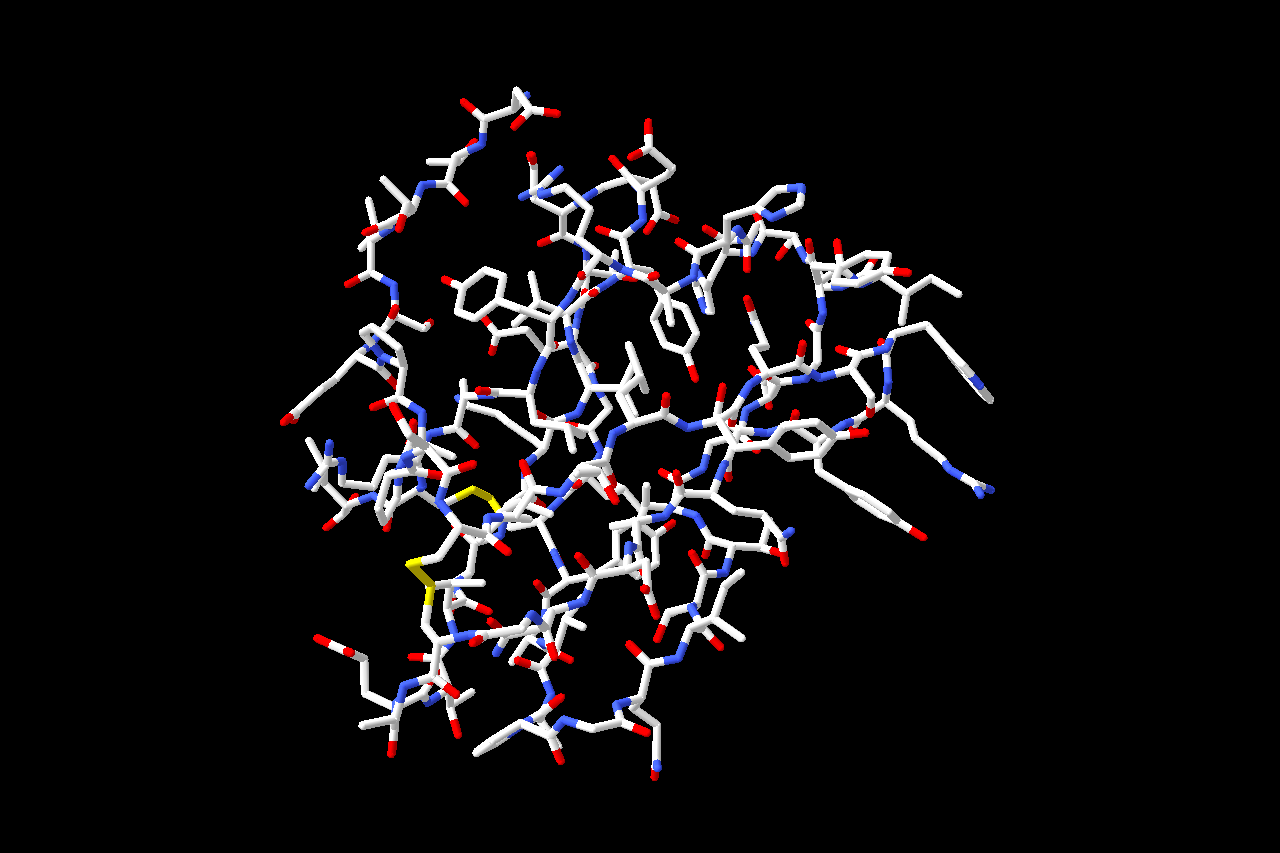}
              &  \includegraphics[width=0.32\textwidth]{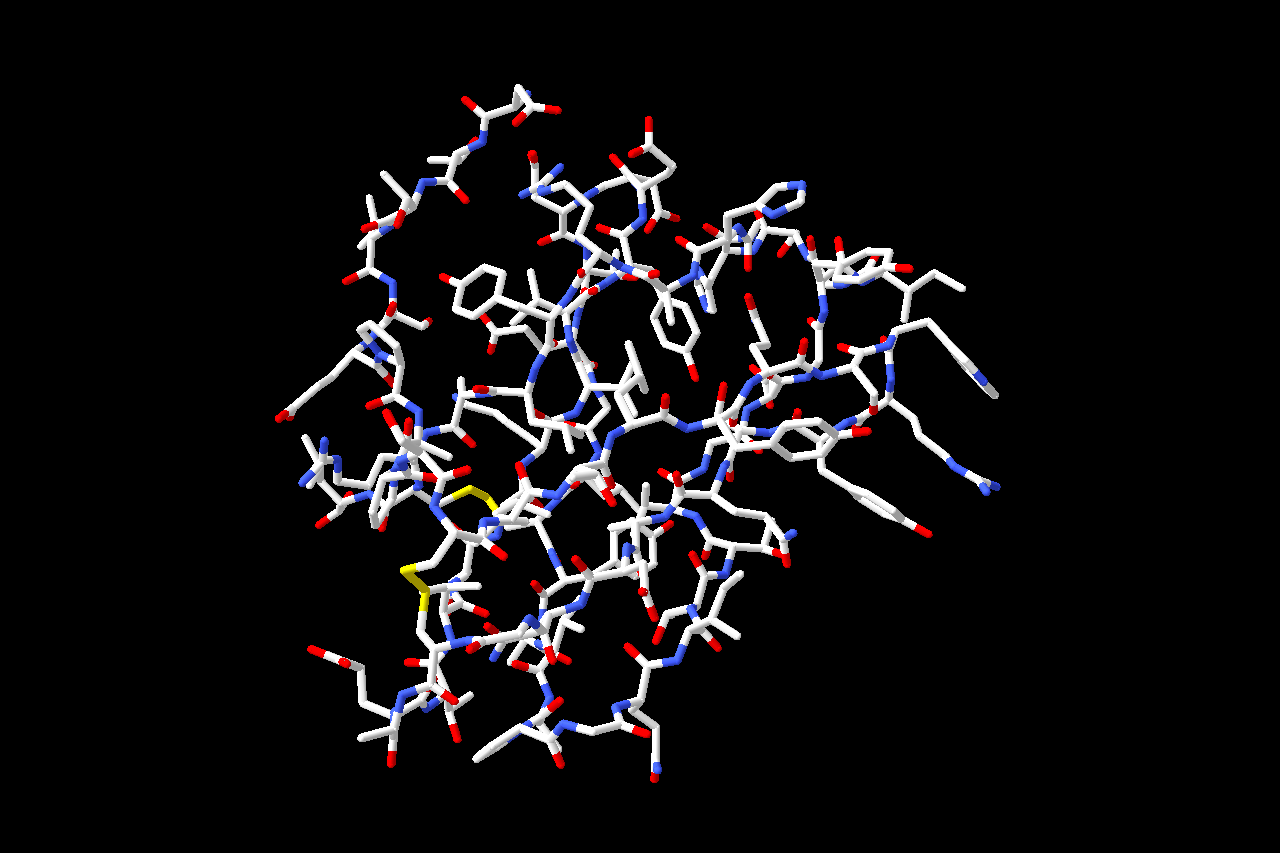} \\ \hline
    \rotatebox{90}{\hspace{32pt}1LFB}
              & \includegraphics[width=0.32\textwidth]{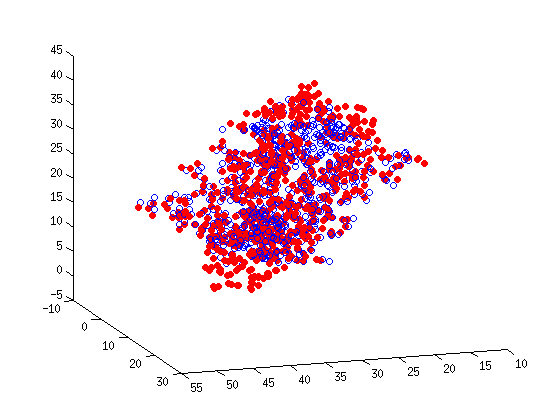}
              & \includegraphics[width=0.32\textwidth]{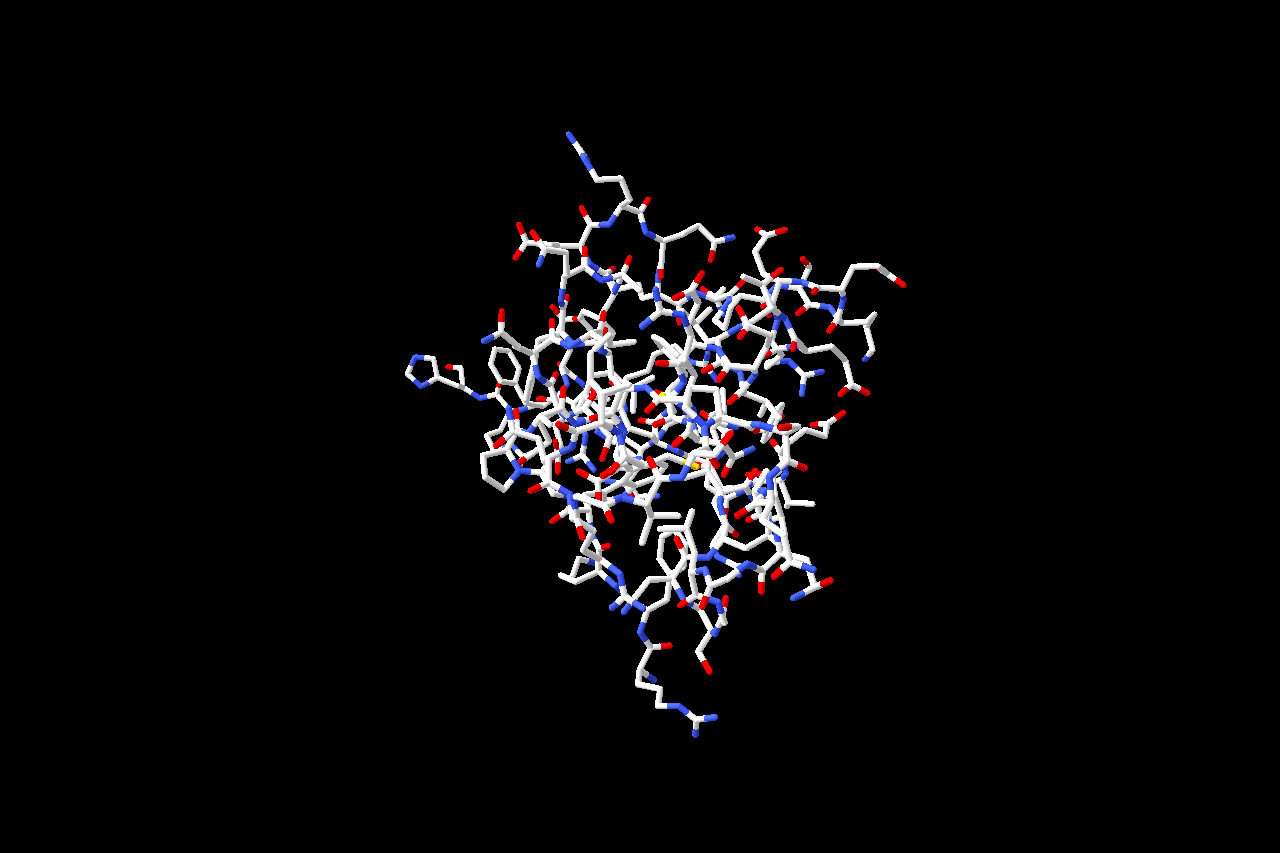}
              &  \includegraphics[width=0.32\textwidth]{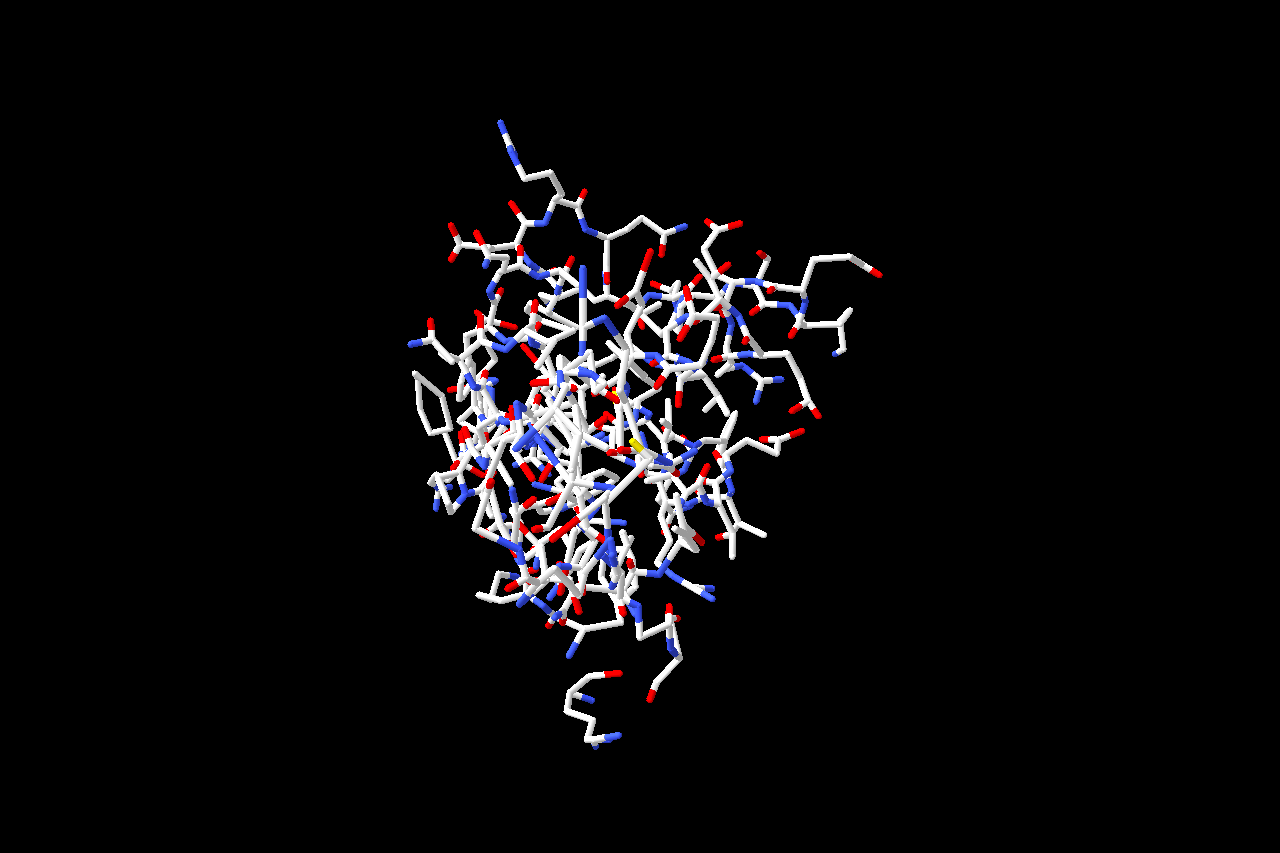} \\ \hline
    \rotatebox{90}{\hspace{32pt}1PHT}
              & \includegraphics[width=0.32\textwidth]{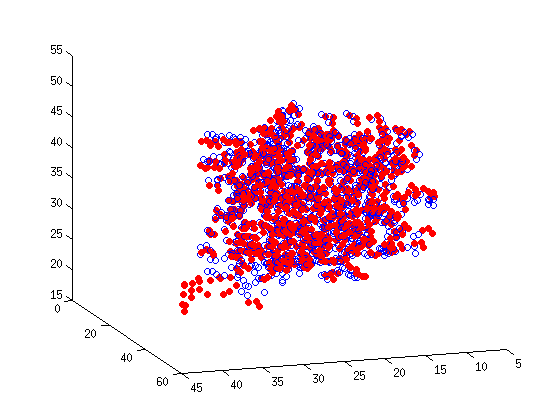}
              & \includegraphics[width=0.32\textwidth]{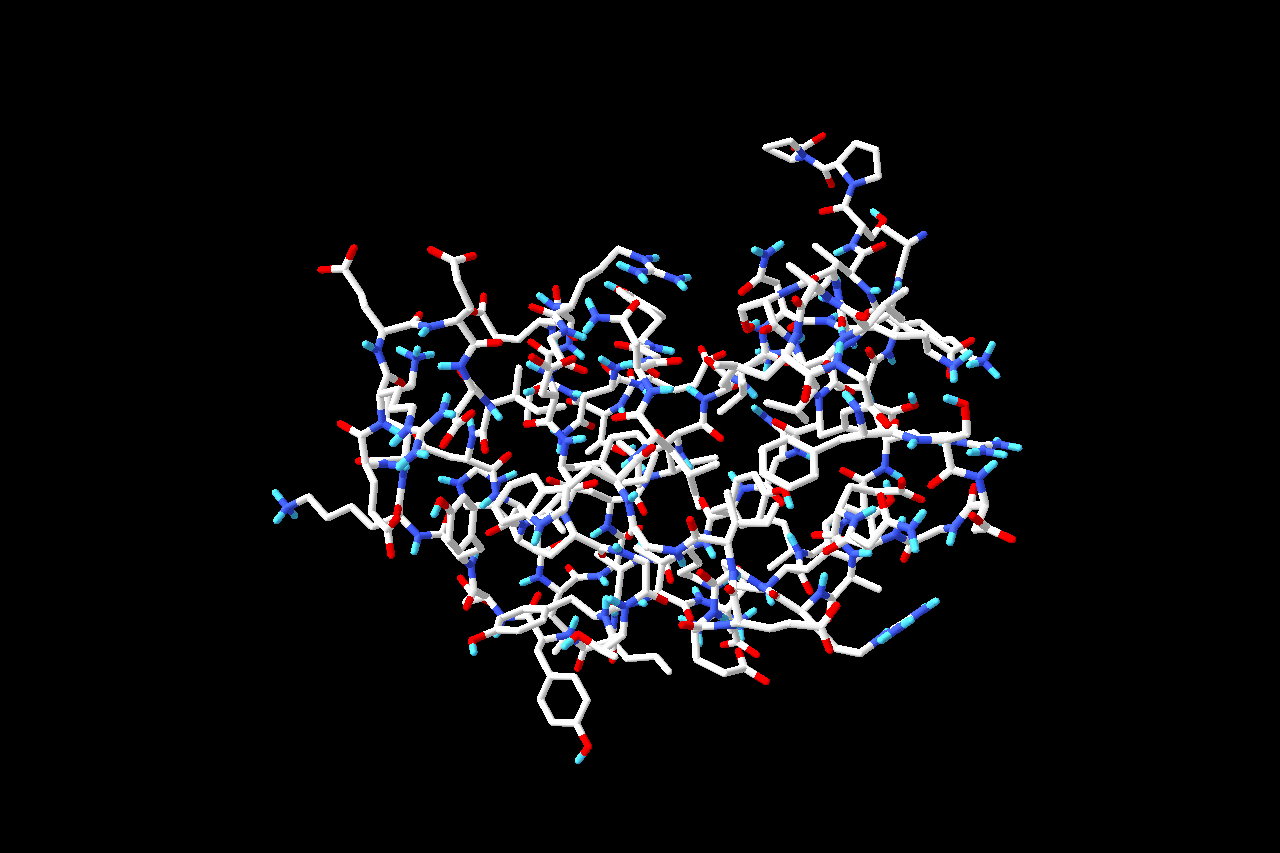}
              &  \includegraphics[width=0.32\textwidth]{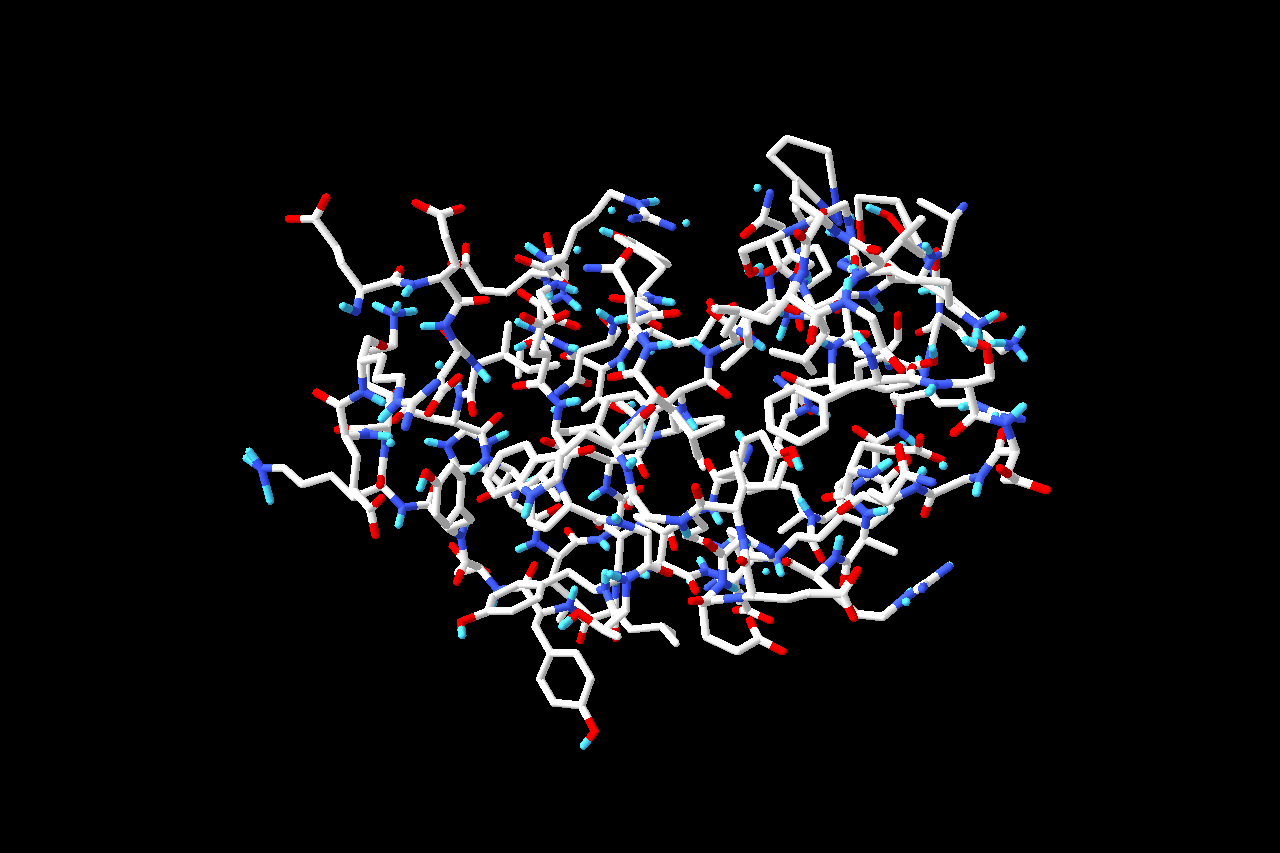} \\ \hline
    \rotatebox{90}{\hspace{32pt}1POA}
              & \includegraphics[width=0.32\textwidth]{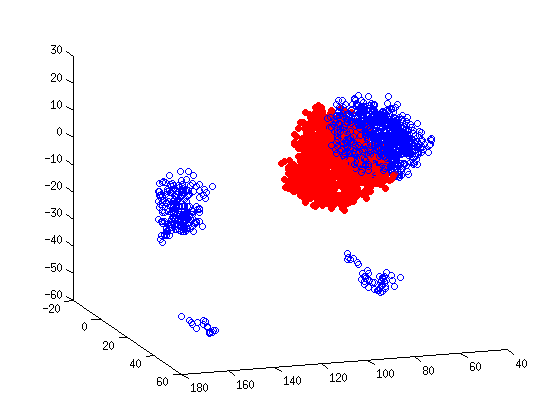}
              & \includegraphics[width=0.32\textwidth]{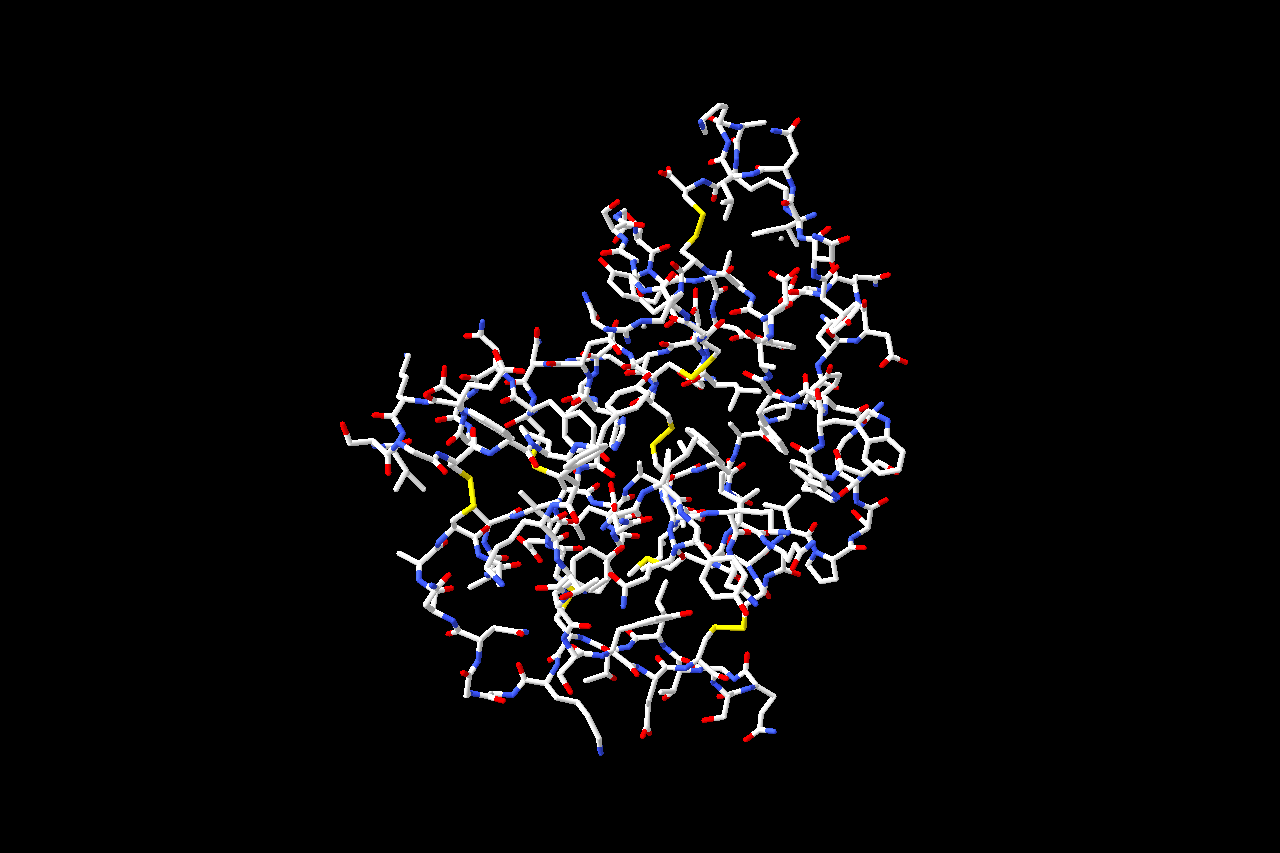}
              &  \includegraphics[width=0.32\textwidth]{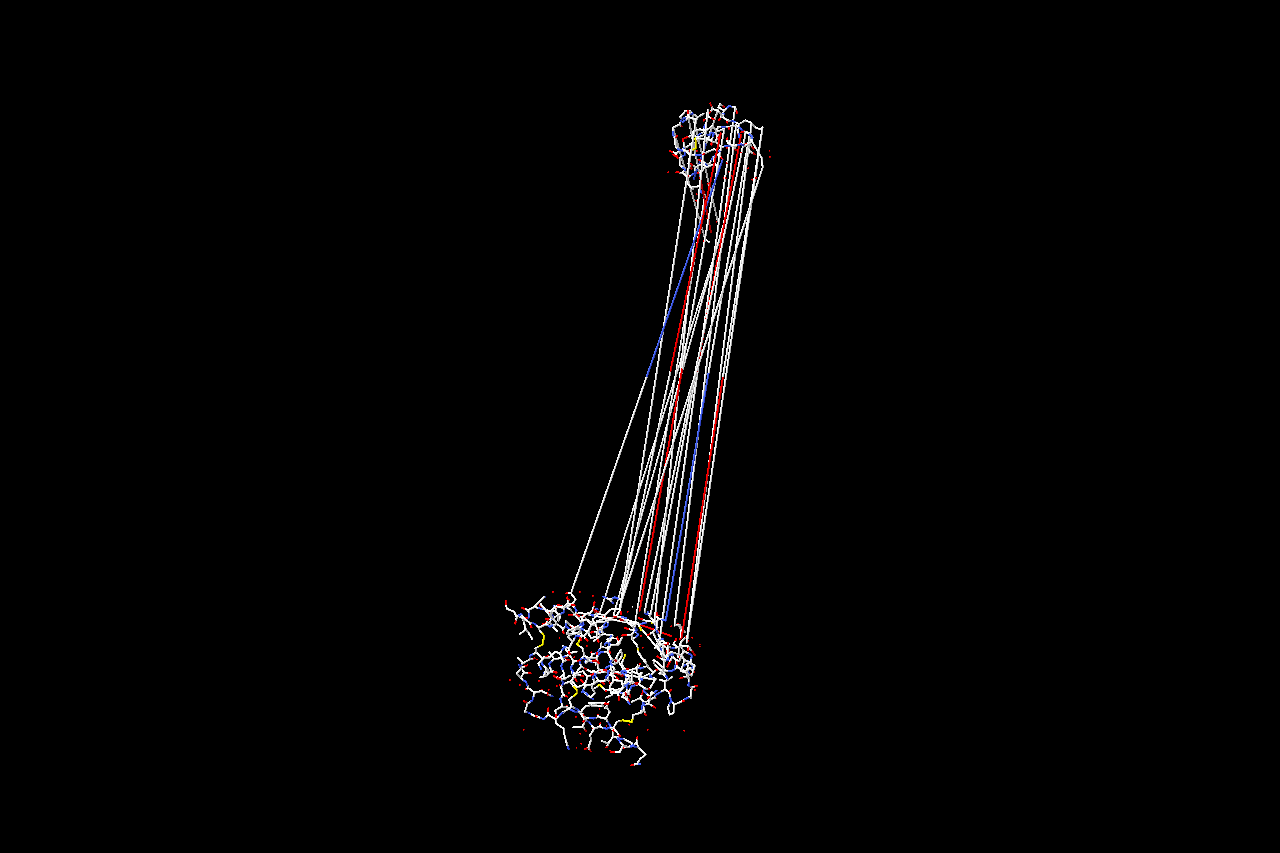} \\ \hline
    \rotatebox{90}{\hspace{32pt}1AX8}
              & \includegraphics[width=0.32\textwidth]{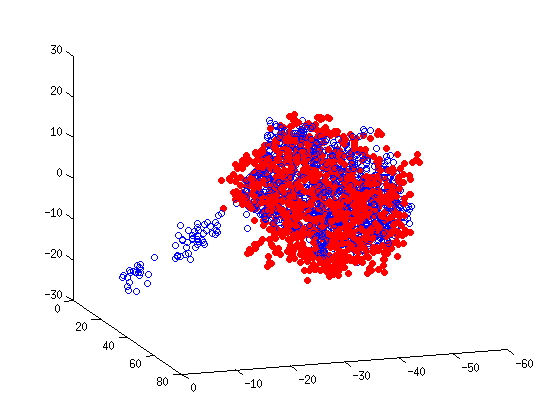}
              & \includegraphics[width=0.32\textwidth]{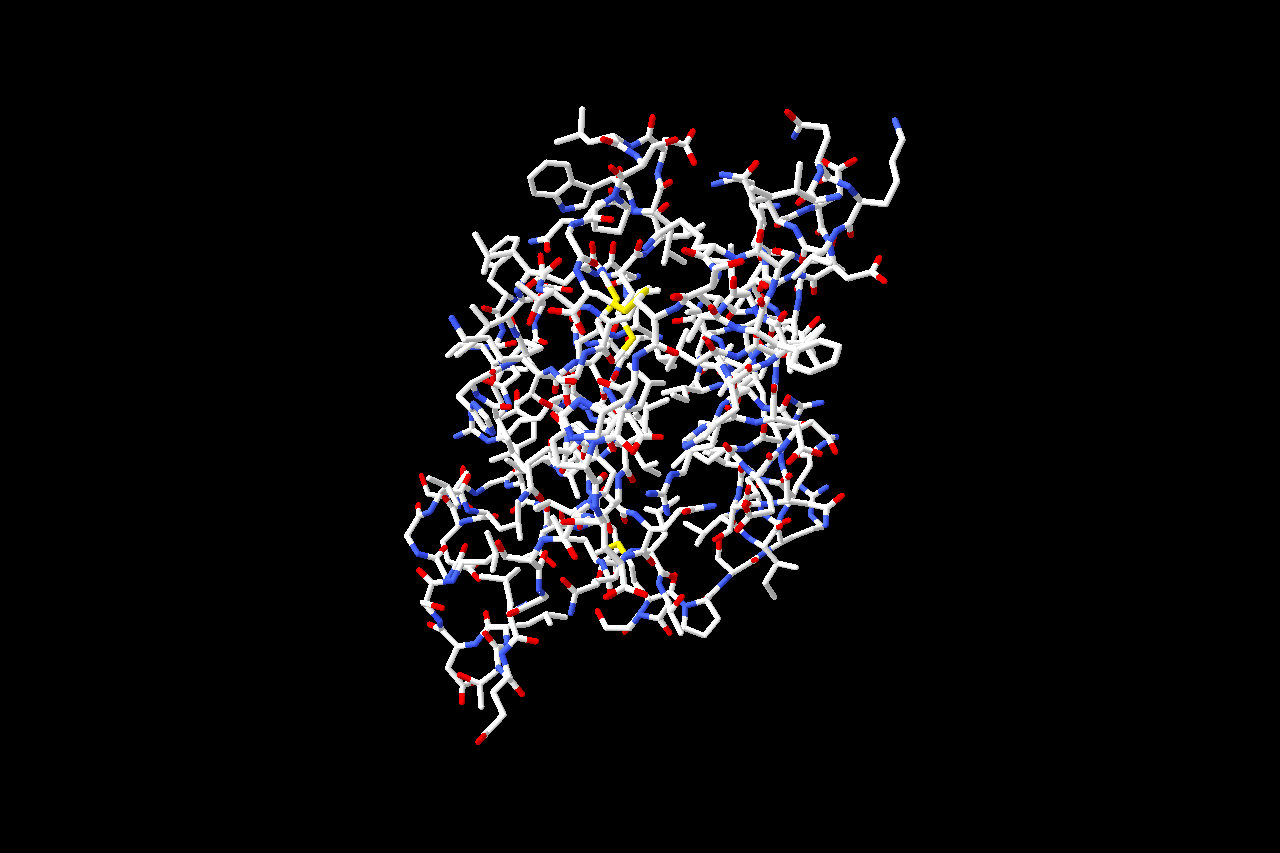}
              &  \includegraphics[width=0.32\textwidth]{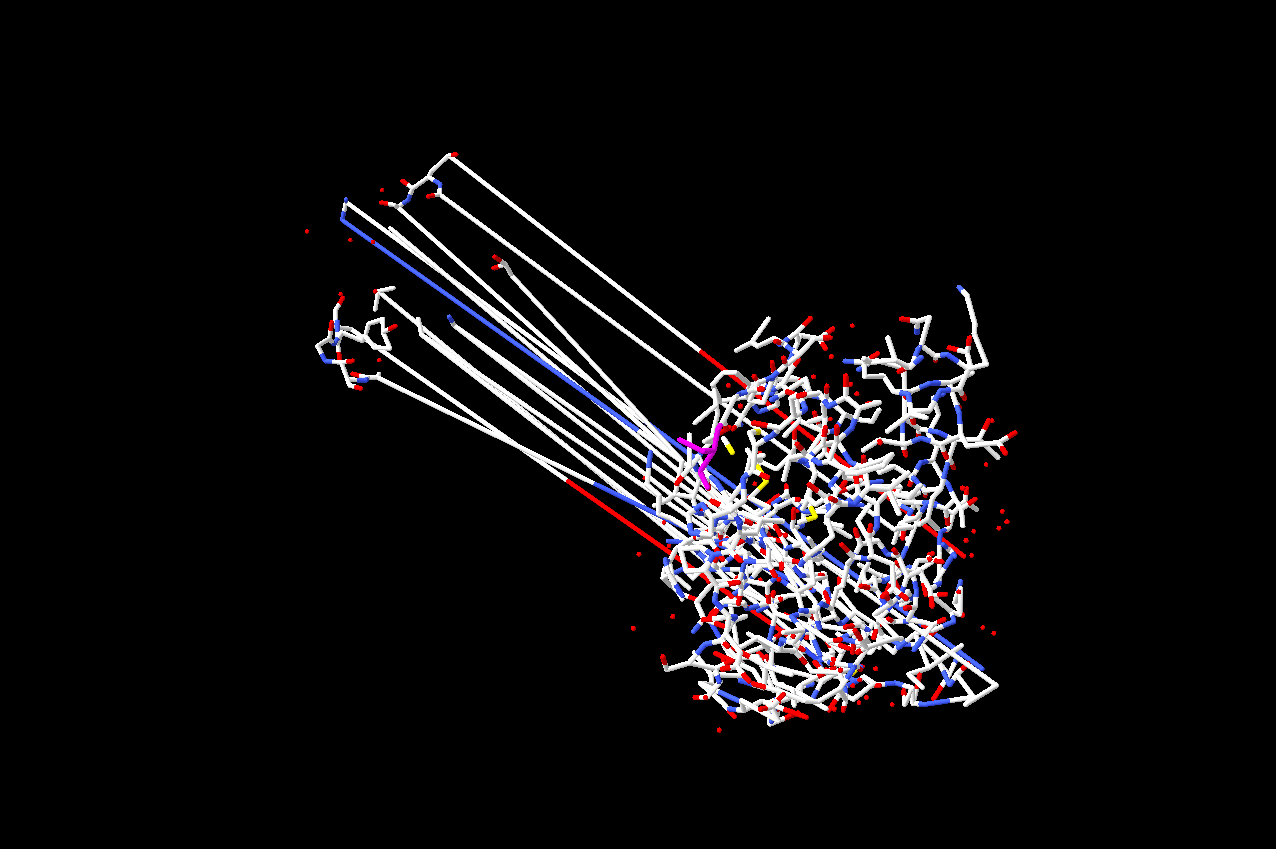} \\ \hline
    \end{tabular}
  \end{center}
  \end{adjustwidth}
\end{figure}

\begin{remark}[An upper bound on distances] \label{remark:protein1}
 The constraint $C_1$ can be easily modified to incorporate additional distance information. For instance, upper and lower bounds could be placed on the distance between (not necessarily adjacent) carbons atoms on a carbon chain. Note that each carbon-carbon bond is approximately 1.5\AA\ in length. \qede\end{remark}

\begin{remark}[Two phase approach]\label{remark:protein2}
 In our implementation, the Douglas--Rachford method encountered difficulties applied to the reconstruction of the two larger proteins. It therefore would be reasonable to consider an approach were one partitions the atoms into sets and applies the Douglas--Rachford to these sub-problems. The reconstructed distances obtained from these sub-problems can then be used as the initial estimates for distances in the original master problem (which considers all the atoms).
 An iterative version is outline in Algorithm~\ref{algor:2step}.

\medskip

\begin{algorithm}[H]
 \caption{A two phase algorithms for protein reconstruction.}\label{algor:2step}
 \SetKwInOut{Input}{input}
 \SetKwInOut{Output}{output}
 \Input{$D\in\R^{n\times n}$ (the partial Euclidean distances matrix)}
  Choose random $X\in[-1,1]^{n\times n}$\;
  $\Gamma:=\{1,2,\dots,n\}$ (each index represents an atom)\;
 \While{continue}{
   \eIf(\tcp*[f]{generate and solve sub-problems (phase 1)}){doPhase1}{
      Choose a partition of $\Gamma$ into the sets $\Gamma_1,\Gamma_2,\dots,\Gamma_m$\;
      \For{$k=1,2,\dots,m$}{
        Apply Algorithm~\ref{algor:DRprotein} to atoms indexed by $\Gamma_k$ to obtain $X_k$ (i.e. the distance matrix for the atoms indexed by $\Gamma_k$).\;
        Update $X$ with the reconstructed distances in $X_k$\;
      }
   }(\tcp*[f]{solve master problem {(phase 2)}})
   {
     Apply Algorithm~\ref{algor:DRprotein} to all atoms (i.e. index by $\Gamma$) to obtain $X$\;
   }
 }
 \Output{$X$ (the reconstructed distance matrix)}
 \end{algorithm}
\medskip
We continue to work on such problem-specific refinements of the Douglas-Rachford method: in most of our example problems a natural splitting is less accessible.
\qede\end{remark}

It would also  be interesting to  apply the methods of this section to \emph{sensor network localization} problems requiring the reconstruction of an incomplete distance matrix. See, for example, \cite{D06,KW10,GTSC13}.

\clearpage 

\subsection{Hadamard matrices}\label{sec:Hadamard}

Recall that a matrix $H=(H_{ij})\in\{-1,1\}^{n\times n}$ is said to be a \emph{Hadamard matrix} of \emph{order} $n$ if
 \begin{equation}\label{eq:hadamardC1}
  H^TH=nI.
 \end{equation}
We note that there are many equivalent characterizations. For instance, (\ref{eq:hadamardC1}) is equivalent to asserting that $H$ has maximal determinant (i.e. $|\det H|=n^{n/2}$)  \cite[Chapter~2]{H06}. A classical result of Hadamard asserts that Hadamard matrices exist only if $n=1,2$ or a multiple of $4$. For orders $1$ and $2$, such matrices are easy to find. For multiples of $4$, the \emph{Hadamard conjecture} asks the converse: \emph{If $n$ is a multiple of $4$, does there exists a Hadamard matrix of order $n$?} Background on Hadamard matrices can be found in \cite{H06}.   Thus, an important completion problem starts with structure restrictions, but with no fixed entries.

Consider the now the problem of finding a Hadamard matrix of a given order. We define the constraints:
 \begin{align}
  C_1 &:= \{X\in\R^{n\times n}|X_{ij}=\pm 1 \text{ for }i,j=1,\dots,n\}, \label{eq:HadamardC1} \\
  C_2 &:= \{X\in\R^{n\times n}|X^TX=nI\}.
 \end{align}
Then $X$ is a Hadamard matrix if and only if $X\in C_1\cap C_2$.

The first constraint, $C_1$, is clearly non-convex. However, its projection is simple and is given pointwise by
\begin{equation}
 P_{C_1}(X)_{ij} = \left\{\begin{array}{cl}
                    -1     & \text{if }X_{ij}<-1, \\
                    \pm 1 & \text{if }X_{ij}=0,\\
                    1      & \text{otherwise.}
                   \end{array}\right.
\end{equation}

The second constraint, $C_2$, is also non-convex. To see this, consider the mid-point of the two matrices
 $$ \begin{pmatrix} \sqrt{2} & 0 \\ 0 & \sqrt{2} \\ \end{pmatrix},
    \begin{pmatrix} 0 & \sqrt{2} \\ \sqrt{2} & 0 \\ \end{pmatrix} \in C_2.$$
Nevertheless, a projection can be found by solving the equivalent problem of finding a nearest orthogonal matrix --- a special case of the \emph{Procrustes problem} described above.

\begin{proposition}\label{prop:HadamardC2}
 Let $X=USV^T$ be a singular value decomposition. Then
 $$\sqrt{n}UV^T\in P_{C_2}(X).$$
\end{proposition}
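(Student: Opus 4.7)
The plan is to reduce the projection problem to the classical orthogonal Procrustes problem and then invoke the singular value decomposition.

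First I would rewrite $C_2$ in a more convenient form: $Y \in C_2$ if and only if $Y = \sqrt{n}\,Q$ for some orthogonal matrix $Q$ (i.e.\ $Q^TQ = I$). The projection problem $\min_{Y \in C_2}\|X - Y\|_F$ then becomes $\min_{Q^TQ = I}\|X - \sqrt{n}\,Q\|_F$. Expanding the squared Frobenius norm via the trace inner product,
\begin{equation*}
\|X - \sqrt{n}\,Q\|_F^2 = \|X\|_F^2 - 2\sqrt{n}\,\tr(X^TQ) + n\,\tr(Q^TQ) = \|X\|_F^2 + n^2 - 2\sqrt{n}\,\tr(X^TQ),
\end{equation*}
so the minimization reduces to maximizing $\tr(X^TQ)$ over orthogonal $Q$.

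Next I would substitute the singular value decomposition $X = USV^T$, with $U,V$ orthogonal and $S = \diag(s_1,\ldots,s_n)$ with $s_i \ge 0$. Using cyclicity of the trace,
\begin{equation*}
\tr(X^TQ) = \tr(VSU^TQ) = \tr(S\,U^TQV) = \tr(SZ),
\end{equation*}
where $Z := U^TQV$. As $Z$ is a product of orthogonal matrices it is itself orthogonal, and conversely any orthogonal $Z$ arises from some orthogonal $Q = UZV^T$. Thus maximizing $\tr(X^TQ)$ over orthogonal $Q$ is the same as maximizing $\sum_{i=1}^n s_i Z_{ii}$ over orthogonal $Z$.

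The key step is the bound $|Z_{ii}| \le 1$ for every orthogonal $Z$ (since each row of $Z$ has unit Euclidean norm), which together with $s_i \ge 0$ yields
\begin{equation*}
\tr(SZ) = \sum_{i=1}^n s_i Z_{ii} \le \sum_{i=1}^n s_i,
\end{equation*}
with equality achieved at $Z = I$. Tracing back, this gives the optimal $Q^\star = UV^T$ and hence $Y^\star = \sqrt{n}\,UV^T \in P_{C_2}(X)$, as required. The only subtlety is that the projection need not be unique when $S$ has vanishing or repeated singular values (since then the SVD itself is nonunique and other orthogonal $Z$ attain the maximum), which is why the statement is phrased with set membership rather than equality; this is why the proposition writes ``$\in$'' instead of ``$=$''.
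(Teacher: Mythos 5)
Your proof is correct and follows essentially the same route as the paper: both reduce the problem to finding the nearest orthogonal matrix to $X/\sqrt{n}$ (the orthogonal Procrustes problem) and read the answer off the singular value decomposition. The only difference is that the paper simply cites Sch\"onemann for the Procrustes step, whereas you prove it in full via the trace-maximization bound $\tr(SZ)\le\sum_{i}s_i$ for orthogonal $Z$; your closing remark on non-uniqueness also correctly accounts for the set membership ``$\in$'' in the statement.
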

\begin{proof}
 Let $Y=X/\sqrt{n}$. Then
  $$\min_{X\in\R^{n\times n}\atop A^TA=nI}\|X-A\|_F=\sqrt{n}\left(\min_{Y\in\R^{n\times n}\atop B^TB=I}\|Y-B\|_F\right).$$
 Any solution to the latter is the nearest orthogonal matrix to $Y$. One such matrix can be obtained by replacing all singular values of $Y$ by `one' (see, for example, \cite{S66}). Since
  $$Y=U\hat SV^T\text{ where }\hat S=S/\sqrt{n},$$
is a singular value decomposition, it follows that $UV$ is the nearest orthogonal matrix to $Y$. The result now follows.
\end{proof}

\begin{remark}
 Any $A\in P_{C_2}(X)$ is such that $\tr(A^TX)=\max_{B\in C_2} \tr(B^TX)$. \qede
\end{remark}

\begin{remark}
 Consider instead the matrix completion problem of finding a Hadamard matrix with some known entries.  This can be cast within the above framework by appropriate modification of $C_1$. The projection onto $C_1$  only differs by leaving the known entries unchanged. \qede
\end{remark}

We next give a second useful formulation for the problem of finding a Hadamard matrix of a given order. We take $C_1$ as in (\ref{eq:hadamardC1}) and define
 $$C_3:=\{X\in\R^{n\times n}|X^TX=\|X\|_F\, I\}.$$
If $X\in C_1$ then $\|X\|_F=n$, hence $C_1\cap C_2=C_1\cap C_3$. It follows that $X$ is a Hadamard matrix if and only if $X\in C_1\cap C_3$. A projection onto $C_3$ is given similarly  $P_{C_2}$.

\begin{proposition}\label{prop:HadamardC3}
Let $X=USV^T$ be a singular value decomposition. Then
 $$\sqrt{\|X\|_F}\,UV^T\in P_{C_3}(X).$$
\end{proposition}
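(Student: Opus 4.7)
The plan is to follow the strategy of Proposition~\ref{prop:HadamardC2} with the scalar $\sqrt{n}$ replaced by $\sqrt{\|X\|_F}$. The key structural observation is that the defining relation $A^TA=\|A\|_F\,I$ forces every singular value of $A$ to coincide, so any element of $C_3$ is a scaled orthogonal matrix. This reduces the projection problem to a Procrustes-type nearest-orthogonal-matrix calculation after rescaling.

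Concretely, I would parametrise the candidate projection as $\tau Q$ with $Q$ orthogonal and $\tau:=\sqrt{\|X\|_F}$ (the degenerate case $X=0$ is handled separately). Then
$$\|X-\tau Q\|_F=\tau\,\|X/\tau-Q\|_F,$$
so minimising over orthogonal $Q$ reduces to finding the nearest orthogonal matrix to $X/\tau$. Since $X=USV^T$ implies $X/\tau=U(S/\tau)V^T$ is an SVD of the scaled matrix, the standard result invoked in the proof of Proposition~\ref{prop:HadamardC2} --- replace every singular value by one --- identifies $UV^T$ as the minimiser. Multiplying back by $\tau$ then yields the claimed formula $\sqrt{\|X\|_F}\,UV^T$.

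The main obstacle is the self-referential form of the constraint defining $C_3$: it couples $A^TA$ to $\|A\|_F$, so admissibility of the candidate cannot be taken for granted. To close the argument I would verify directly that the proposed point lies in $C_3$: a quick computation gives $(\sqrt{\|X\|_F}\,UV^T)^T(\sqrt{\|X\|_F}\,UV^T)=\|X\|_F\,I$, which matches the required form, and compatibility with $\|A\|_F$ reduces to the identity $\|X\|_F=n$ exploited in the paragraph preceding the proposition to establish $C_1\cap C_2=C_1\cap C_3$. Within that regime --- precisely the one that concerns the Hadamard feasibility iteration --- the formula coincides with the $P_{C_2}$ projection of Proposition~\ref{prop:HadamardC2}, which is what makes the second formulation a genuine and convenient alternative rather than a substantively different projection.
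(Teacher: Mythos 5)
Your route is exactly the one the paper intends --- its entire proof of this proposition reads ``this is a straightforward modification of Proposition~\ref{prop:HadamardC2}'', i.e.\ rerun the rescale-to-Procrustes argument with $\sqrt{n}$ replaced by $\sqrt{\|X\|_F}$ --- and your Procrustes reduction itself is executed correctly. The genuine gap is the step where you parametrise the feasible candidates as $\tau Q$ with $\tau=\sqrt{\|X\|_F}$. That is not what $C_3$ consists of: taking the trace of $A^TA=\|A\|_F\,I$ gives $\|A\|_F^2=n\|A\|_F$, so $\|A\|_F\in\{0,n\}$ and as a set $C_3=C_2\cup\{0\}$. The minimisation defining $P_{C_3}(X)$ therefore ranges over $\{\sqrt{n}\,Q: Q^TQ=I\}\cup\{0\}$, not over $\{\sqrt{\|X\|_F}\,Q : Q^TQ=I\}$, and the candidate $\sqrt{\|X\|_F}\,UV^T$ does not even belong to $C_3$ unless $\|X\|_F=n$ --- which is precisely what your own admissibility check at the end uncovers. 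Your fallback, restricting to the slice $\|X\|_F=n$ ``relevant to the Hadamard iteration'', does not close the argument: the proposition is asserted for arbitrary $X$, Douglas--Rachford iterates have no reason to satisfy $\|X\|_F=n$, and on that slice the formula merely reproduces $P_{C_2}$, so nothing new would be proved.

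To be fair, the obstacle you hit lives in the statement rather than in your execution: for $n=1$ and $X=4$ one has $C_3=\{-1,0,1\}$ while the claimed projection is $\sqrt{4}=2$, so the proposition is false for the set $C_3$ as literally defined, and the paper's one-line proof glosses over exactly the point you flagged. What is actually true, and what your argument does prove, is that $\sqrt{\|X\|_F}\,UV^T$ is a nearest point to $X$ in the \emph{moving} set $\{A\in\R^{n\times n}: A^TA=\|X\|_F\,I\}$, with the scalar $\|X\|_F$ frozen at the current point; that is the operator the authors implement. The correct conclusion of your write-up is therefore that conditional claim, not membership in $P_{C_3}(X)$, and a careful version of the proposition should either redefine $C_3$ accordingly or add the hypothesis $\|X\|_F=n$.
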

\begin{proof}
 This is a straightforward modification of Proposition~\ref{prop:HadamardC2}.
\end{proof}

\begin{remark}[Complex Hadamard matrices]\label{rem:h2} It is also possible to consider \emph{complex Hadamard matrices}. In this case,
 $$C_1:=\{X\in\mathbb{C}^{n\times n}|~|X_{ij}|=1\}.$$
The projection onto $C_1$ is straightforward, and is given by
  $$P_{C_1}(X)_{ij}=\left\{\begin{array}{cl}
                     X_{ij}/|X_{ij}| & \text{if }X_{ij}\neq 0, \\
                     C_1             & \text{otherwise.} \\
                    \end{array}\right.$$
Note that the real solutions to $|X_{ij}|=1$ are $\pm 1$.\qede
\end{remark}

\begin{example}[Experiments with Hadamard matrices]\label{ex:had}
Let $H_1$ and $H_2$ be Hadamard matrices. We say $H_1$ are $H_2$ are \emph{distinct} if $H_1\neq H_2$. We say $H_1$ and $H_2$ are \emph{equivalent} if $H_2$ can be obtained from $H_1$ by performing a sequence of row/column permutations, and/or multiplying row/columns by $-1$. The number of distinct (resp. inequivalent) Hadamard matrices of order $4n$ is given in OEIS sequence \href{http://oeis.org/A206712}{A206712} :768, 4954521600, 20251509535014912000,... (resp. \href{http://oeis.org/A007299}{A00729}: 1, 1, 1, 1, 5, 3, 60, 487, 13710027, ...). With increasing order, the number of Hadamard matrices is a faster than exponentially decreasing proportion of the total number of $\{+1,-1\}$-matrices (of which there are $2^{n^2}$ for order $n$). This is reflected in the observed rapid increase in difficulty of finding Hadamard matrices using the Douglas--Rachford scheme,  as order increases.

We applied the Douglas--Rachford algorithm to 1000 random replications, for each of the above formulation. Our computational experience is summarized in Table~\ref{table2} and Figure~\ref{fig:Hadamard}. To determine if two Hadamard matrices are equivalent, we use a \emph{Sage} implementation of the graph isomorphism approach outlined in \cite{K79}.

\begin{table}[htb]
 \begin{adjustwidth}{-0.5in}{-0.5in} 
 \begin{center}
   \caption{Number of Hadamard matrices found from 1000 instances.}\label{table2}
   {\footnotesize
   \begin{tabular}{|c|cccc|cccc|} \hline
    \multirow{2}{*}{Order} & \multicolumn{4}{c|}{Prop.~\ref{prop:HadamardC2} Formulation}  & \multicolumn{4}{|c|}{Prop.~\ref{prop:HadamardC3} Formulation} \\ \cline{2-9}
                           & Ave Time (s) & Solved & Distinct & Inequivalent & Ave Time (s) & Solved & Distinct & Inequivalent \\ \hline
    2                      & 1.1371       & 534    & 8        & 1            & 1.1970       & 505    & 8        & 1 \\
    4                      & 1.0791       & 627    & 422      & 1            & 0.2647       & 921    & 541      & 1 \\
    8                      & 0.7368       & 996    & 996      & 1            & 0.0117       & 1000   & 1000     & 1 \\
    12                     & 7.1298       & 0      & 0        & 0            & 0.8337       & 1000   & 1000     & 1 \\
    16                     & 9.4228       & 0      & 0        & 0            & 11.7096      & 16     & 16       & 4 \\
    20                     & 20.6674      & 0      & 0        & 0            & 22.6034      & 0      & 0        & 0 \\ \hline
   \end{tabular}
   }
 \end{center}
 \end{adjustwidth}
\end{table}

We make some brief comments our the results.
\begin{itemize}\item

The formulation based on Proposition~\ref{prop:HadamardC3} was found to be faster and more successful than the formulation based on Proposition~\ref{prop:HadamardC2}, especially for orders $8$ and $12$ where it was successful in every replication. For order less than or equal to 12, the Douglas--Rachford schema was able to find the unique inequivalent Hadamard matrix under either formulation (except for $n=12$, Prop.~\ref{prop:HadamardC2} formulation). Moreover, the Proposition~\ref{prop:HadamardC3} formulation was able to find four of the five inequivalent Hadamard matrices of order 16.\footnote{All five can be found at \url{http://www.uow.edu.au/~jennie/hadamard.html}.}
\item
From Figure~\ref{fig:Hadamard} we observe that if a Hadamard matrix was found, it was usually found within the first few thousand iterations. The frequency histogram for order 16, shown in Figure~\ref{fig:Hadamard}(f), varied significantly from the corresponding histograms for lower orders.
\end{itemize}

\begin{figure}
  \begin{center}
	  \begin{subfigure}{0.49\textwidth}
	  \includegraphics[width=\textwidth]{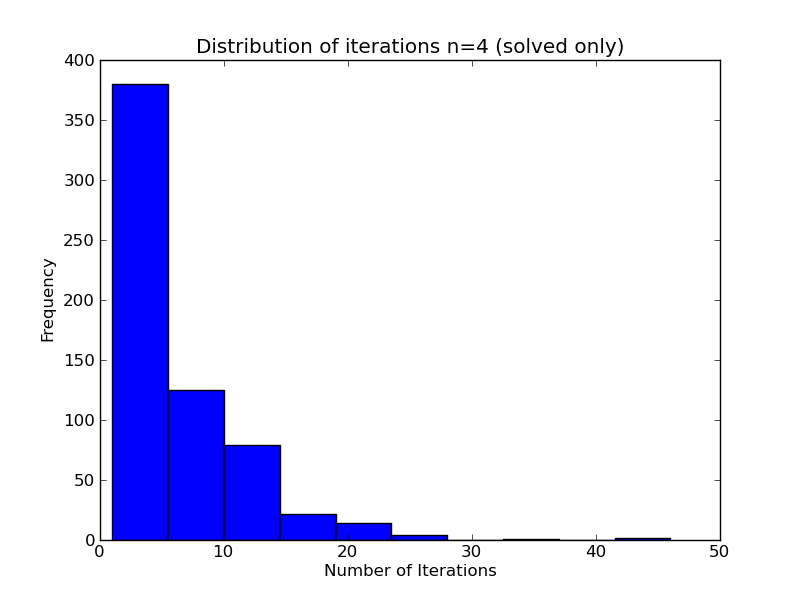}\caption{$n=4$, Prop.~\ref{prop:HadamardC2} formulation.}
	  \end{subfigure}
	  \begin{subfigure}{0.49\textwidth}
	  \includegraphics[width=\textwidth]{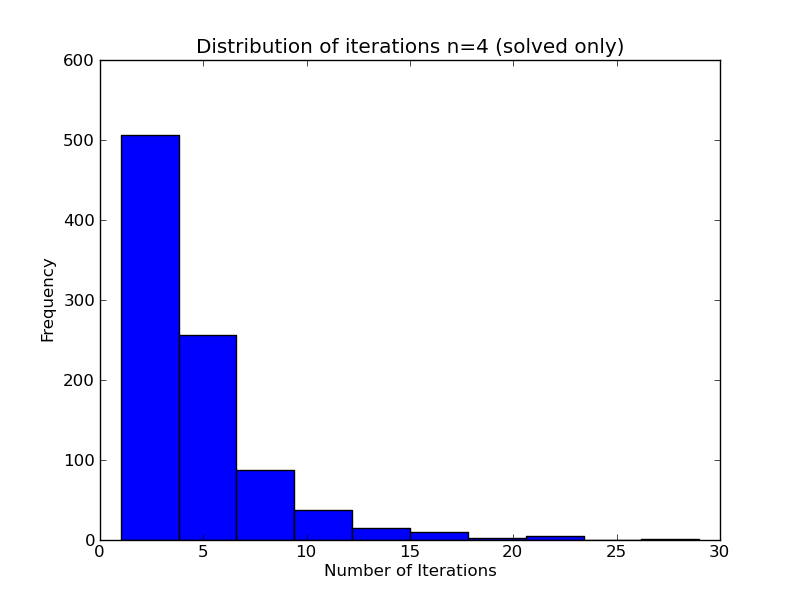}\caption{$n=4$, Prop.~\ref{prop:HadamardC3} formulation.}
	  \end{subfigure}
	  \begin{subfigure}{0.49\textwidth}
	  \includegraphics[width=\textwidth]{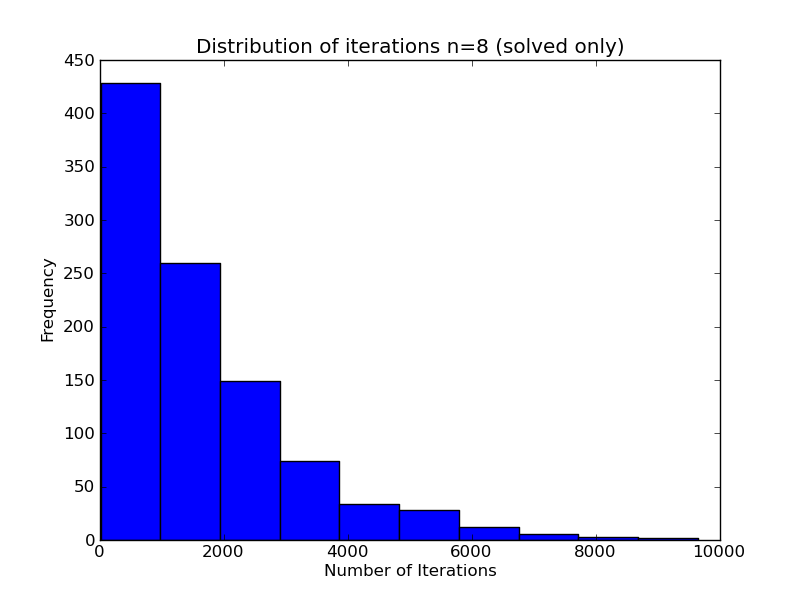}\caption{$n=8$, Prop.~\ref{prop:HadamardC2} formulation.}
	  \end{subfigure}
	  \begin{subfigure}{0.49\textwidth}
	  \includegraphics[width=\textwidth]{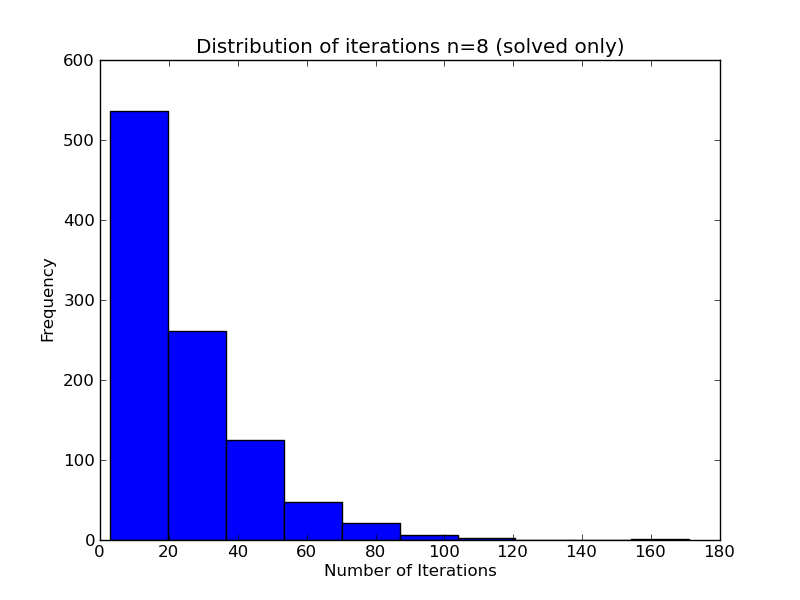}\caption{$n=8$, Prop.~\ref{prop:HadamardC3} formulation.}
	  \end{subfigure}
	  \begin{subfigure}{0.49\textwidth}
	  \includegraphics[width=\textwidth]{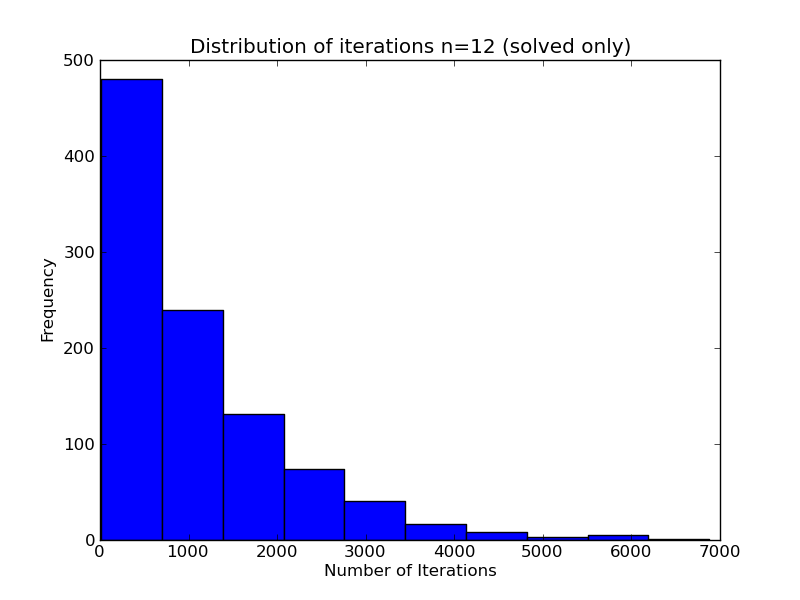}\caption{$n=12$, Prop.~\ref{prop:HadamardC3} formulation.}
	  \end{subfigure}
	  \begin{subfigure}{0.49\textwidth}
	  \includegraphics[width=\textwidth]{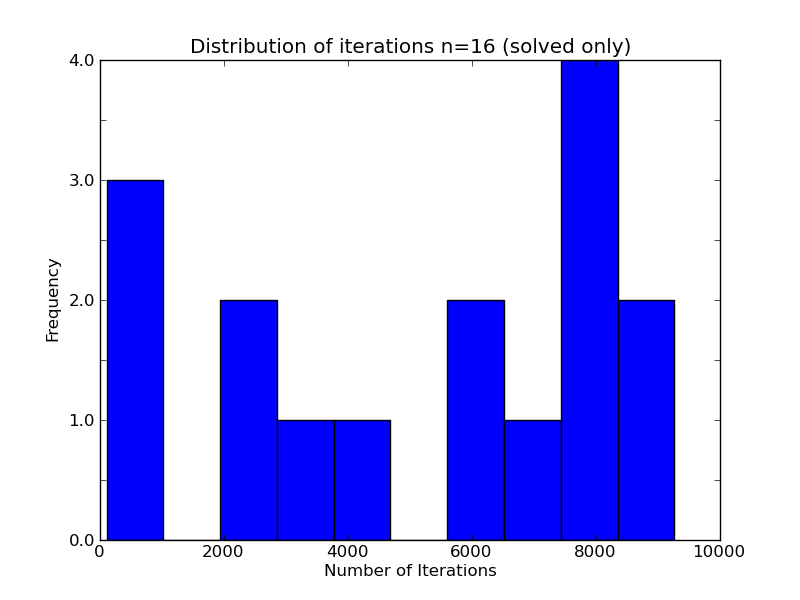}\caption{$n=16$, Prop.~\ref{prop:HadamardC3} formulation.}
	  \end{subfigure}
  \end{center}
  \caption{Frequency histograms showing the number of iterations required to find a Hadamard matrix, for different order and formulations (solved instances only).} \label{fig:Hadamard}
\end{figure}

For orders 20 and above, it is possible that another formulation might be more fruitful, but almost certainly better and more problem-specific heuristics will again be needed. \qede

\end{example}

\begin{remark}
 Since $C_2$ is non-convex, when computing its projection we are forced to make a selection from the set of nearest points. In our experiments we have always chosen the nearest point in the same way. It maybe possible to benefit from making the selection according to some other criterion.  \qede
\end{remark}

We now turn our attention to some special classes of Hadamard matrices.

\subsubsection{Skew-Hadamard matrices}
Recall that a matrix $A=(a_{ij})\in\R^{n\times n}$ is \emph{skew-symmetric} if $A^T=-A$. A \emph{skew-Hadamard matrix} is a Hadamard matrix, $H$, such that $(I-H)$ is skew-symmetric. That is,
 $$H+H^T=2I.$$
Skew-Hadamard matrices are of interest, for example, in the construction of \emph{combinatorial designs}. (For a survey see \cite{KS08}.) The number of inequivalent skew-Hadamard matrices of order $4n$ is given in OEIS sequence \href{http://oeis.org/A001119}{A001119}: 1, 1, 2, 2, 16, 54, \dots (for $n=2,3,\dots$).

In addition to the constraints $C_1$ and $C_2$ from the previous section, we define the affine constraint
 $$C_3 := \{X\in\R^{n\times n}|X+X^T=2I\}.$$

A projection onto $C_1\cap C_3$ is given by
 $$P_{C_1\cap C_3}(X)_{ij}=\left\{\begin{array}{cl}
                       1  & \text{if }i\neq j\text{ and }X_{ij}\geq X_{ji}, \\
                       -1 & \text{if }i\neq j\text{ and }X_{ij}<X_{ji}, \\
                       1 & \text{otherwise.}
                      \end{array}\right.$$

Then $X$ is a skew-Hadamard matrix if and only if $X\in(C_1\cap C_3)\cap C_2$.

Table~\ref{tab:skewHad} shows the results of the same experiment as Section~\ref{sec:Hadamard}, but with the skew constraint incorporated.

\begin{table}[htb]
 \begin{adjustwidth}{-1in}{-1in} 
 \begin{center}
   \caption{Number of skew-Hadamard matrices found from 1000 instances.}\label{tab:skewHad}
   {\footnotesize
   \begin{tabular}{|c|cccc|cccc|} \hline
    \multirow{2}{*}{Order} & \multicolumn{4}{c|}{Prop.~\ref{prop:HadamardC2} Formulation}  & \multicolumn{4}{|c|}{Prop.~\ref{prop:HadamardC3} Formulation} \\ \cline{2-9}
                           & Ave Time (s) & Solved & Distinct & Inequivalent & Ave Time (s) & Solved & Distinct & Inequivalent \\ \hline
    2                      & 0.0003       & 1000   & 2        & 1            & 0.0004       & 1000   & 2        & 1 \\
    4                      & 1.1095       & 719    & 16       & 1            & 1.6381       & 634    & 16       & 1 \\
    8                      & 0.7039       & 902    & 889      & 1            & 0.0991       & 986    & 968      & 1 \\
    12                     & 14.1835      & 43     & 43       & 1            & 0.0497       & 999    & 999      & 1 \\
    16                     & 19.3462      & 0      & 0        & 0            & 0.2298       & 1000   & 1000     & 2 \\
    20                     & 29.0383      & 0      & 0        & 0            & 20.0296      & 495    & 495      & 2 \\ \hline
   \end{tabular}
   }
 \end{center}
 \end{adjustwidth}
\end{table}

\begin{remark}
Comparing the results of Table~\ref{tab:skewHad} with those of Table~\ref{table2}, it is notable that by placing additional constraints on the problem, both methods now succeed at higher orders, method two is faster than before, and  we can successfully find all inequivalent skew matrices of order 20 or less.

In contrast, the three-set feasibility problem $C_1\cap C_2\cap C_3$ was unsuccessful, except for order 2. This is despite the projection onto the affine set $C_3$ having the simple formula
\begin{equation}
 P_{C_3}(X) = I+\frac{X-X^T}{2}.
\end{equation}
Many mysteries remain!
\qede

\end{remark}

\subsubsection{Circulant Hadamard matrices}

Recall that a matrix $A=(a_{ij})\in\R^{n\times n}$ is \emph{circulant} if it can be expressed as
 $$A=\begin{pmatrix}
      \lambda_1 & \lambda_2 & \dots  & \lambda_n \\
      \lambda_n & \lambda_1 & \dots  & \lambda_{n-1} \\
      \vdots    & \vdots    & \ddots & \vdots \\
      \lambda_2 & \lambda_3 & \dots  & \lambda_1 \\
     \end{pmatrix}$$
for some vector $\lambda\in\R^n$.

The set of circulant matrices form a subspace of $\R^{n\times n}$. The set $\{P^k:i=1,2,\dots,n\}$, where $P$ is the cyclic permutation matrix
   $$P:=\begin{pmatrix}
      0 & 0 & \dots & 0 & 1 \\
      1 & 0 & \dots & 0 & 0 \\
      \vdots & \vdots & \ddots &\vdots & \vdots \\
      0 & 0 & \dots & 1 & 0 \\
     \end{pmatrix},$$
forms a basis. Consequently, any circulant matrix, $A$, can be expressed as the linear combination of the form
  $$A=\sum_{k=1}^n\lambda_k P^k.$$

\begin{remark}
 Right (resp. left) multiplication by $P$ results in a cyclic permutation of rows (resp. columns). Hence $P^2,P^3,\dots,P^n$ represent all cyclic permutations of the rows (resp. columns) of $P$. In particular, $P^n$ is the identity matrix.
 \qede \end{remark}

\begin{proposition}[{\cite[Ex.~6.7]{H11}}]
 For $X\in\R^{n\times n}$, the nearest circulant matrix is given by
  $$\sum_{k=1}^n\lambda_kP^k\text{ where } \lambda_k = \frac{1}{n}\sum_{i,j} P^k_{ij}X_{ij}.$$
\end{proposition}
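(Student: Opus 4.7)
My approach is to recognize that the set of circulant matrices is a linear subspace of $\R^{n\times n}$ and then apply the standard orthogonal projection formula with respect to the Frobenius inner product. The key observation, already noted in the excerpt, is that every circulant matrix admits the expansion $\sum_{k=1}^n \lambda_k P^k$, so the subspace $V$ of circulant matrices coincides with $\operatorname{span}\{P^1,\ldots,P^n\}$. The nearest circulant matrix to $X$ is precisely the orthogonal projection $P_V(X)$.

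The main step is to verify that $\{P^1, P^2, \ldots, P^n\}$ is an \emph{orthogonal} basis with respect to $\langle A,B\rangle = \tr(A^T B)$. To do this, I would first compute $\|P^k\|_F^2 = \tr((P^k)^T P^k) = \tr(I) = n$, using that each $P^k$ is a permutation matrix (so $(P^k)^T P^k = I$). Next, for $j \neq k$ with $1 \leq j,k \leq n$, I would compute
\[
\langle P^j, P^k\rangle = \tr\bigl((P^j)^T P^k\bigr) = \tr(P^{k-j}),
\]
using $(P^j)^T = P^{-j} = P^{n-j}$ and $P^n = I$. Since $k - j \not\equiv 0 \pmod n$, the matrix $P^{k-j}$ is a nontrivial cyclic permutation, hence has only zero entries on its main diagonal, so $\tr(P^{k-j}) = 0$. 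This establishes orthogonality and, together with the count $\dim V = n$, confirms that $\{P^k\}_{k=1}^n$ is an orthogonal basis of $V$.

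With orthogonality in hand, the standard projection formula onto a subspace gives
\[
P_V(X) = \sum_{k=1}^n \frac{\langle X, P^k\rangle}{\|P^k\|_F^2} P^k = \sum_{k=1}^n \lambda_k P^k,
\]
where
\[
\lambda_k = \frac{\langle X, P^k\rangle}{n} = \frac{\tr((P^k)^T X)}{n} = \frac{1}{n}\sum_{i,j} P^k_{ij} X_{ij},
\]
matching the claimed formula. The only subtle point is the orthogonality computation above; the rest is routine Hilbert-space projection. I expect no real obstacle, since the entrywise structure of the cyclic permutation matrices makes the trace computation essentially a one-line observation.
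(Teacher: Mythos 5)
Your proof is correct and complete; the paper itself offers no proof of this proposition, merely citing \cite[Ex.~6.7]{H11}, so there is no in-paper argument to compare against. Your route --- identifying the circulant matrices as $\operatorname{span}\{P^1,\dots,P^n\}$, checking that these form an orthogonal basis with $\|P^k\|_F^2=n$, and applying the orthogonal projection formula --- is the standard and natural one, and every step checks out (indeed, orthogonality is even immediate from the fact that distinct powers $P^j$, $P^k$ with $1\le j<k\le n$ have disjoint supports, though your trace computation $\tr(P^{k-j})=0$ is equally valid).
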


A \emph{circulant Hadamard matrix} is a Hadamard matrix which is also circulant.

The circulant Hadamard conjecture asserts: \emph{No circulant Hadamard matrix of order larger than $4$ exists.} For recent progress on the conjecture, see \cite{LS12}. Consistent with this conjecture, our Douglas--Rachford implementation can successfully find circulant matrices of order 4, but fails for higher orders.

\section{Conclusion}\label{sec:conc}

We have provided general guidelines for successful application of the  Douglas Rachford method to (real) matrix completion problems: both convex and non-convex. The message of the previous two sections is the following.   When presented with a new (potentially non-convex) feasibility problem it is well worth seeing if Douglas--Rachford can deal with it--- as it is both conceptually very simple and is usually relatively easy to implement. If it works one may then think about refinements if performance is less than desired.

Moreover, this approach allows for the intuition developed for continuous optimization in Euclidean space to be usefully repurposed. This also lets one profitably consider non-expansive fixed point methods in the class of so-called \emph{CAT(0) metric spaces} --- a far ranging concept introduced twenty years ago in algebraic topology but now finding applications to optimization and fixed point algorithms.
The convergence of various projection type algorithms to feasible points is under investigation by Searston and Sims among others in such spaces \cite{cat} --- thereby broadening the constraint structures to which projection-type algorithms apply to include metrically rather than only algebraically convex sets.

Future computational experiments could include:

 \begin{itemize} \item Implementing  the modifications to the protein reconstruction formulation outlined in Remarks~\ref{remark:protein1} and \ref{remark:protein2}.

  \item  Consideration of similar reconstruction problems arising in the context of ionic liquid chemistry, and as mentioned, sensor location problems.

\item Likewise, for the discovery of larger Hadamard matrices to be tractable by Douglas--Rachford methods, a more efficient implementation is needed and a more puissant model.
    \end{itemize}

\paragraph{Acknowledgments} We wish to thank Judy-anne Osborn and Richard Brent for useful discussions about Hadamard matrices, David Allingham for useful discussions on correlation matrices, Henry Wolkowicz for directing us to many useful matrix completion resources, and Brailey Sims for careful reading of the manuscript.

Many resources can be found at the paper's companion website:\\
\centerline{\url{http://carma.newcastle.edu.au/DRmethods}}

\footnotesize

 \bibliographystyle{plain}

\begin{thebibliography}{10}

\bibitem{AR13}
D. Allingham and J.C.W. Rayner.:
\newblock Testing equality of corresponding variances for two multivariate samples.
\newblock In preparation (2013).

\bibitem{ABM93}
G. Ammar, P. Benner and V. Mehrmann.:
\newblock A multishift algorithm for the numerical solution of algebraic Riccati equations.
\newblock Electron. Transactions on Numer. Anal. 1, 33--48 (1993).

\bibitem{AB12}
F.J. Arag{\'o}n~Artacho and J.M. Borwein.:
\newblock Global convergence of a non-convex {D}ouglas--{R}achford iteration.
\newblock J. Glob. Optim. 1--17 (2012).
\newblock doi: \url{10.1007/s10898-012-9958-4}.

\bibitem{ABT13}
F.J. Arag\'on Aracho, J.M. Borwein and M.K. Tam.:
\newblock Recent results on Douglas--Rachford method for combinatorial optimization problems.
\newblock Preprint \url{http://arxiv.org/abs/1305.2657} (2013).

\bibitem{BB93}
H.H. Bauschke and J.M. Borwein.:
\newblock On the convergence of von Neumann's alternating projection algorithm for two sets.
\newblock Set-Valued Analysis 1(2), 185--212 (1993).

\bibitem{BB94}
H.H. Bauschke and J.M. Borwein.:
\newblock Dykstra's alternating projection algorithm for two sets.
\newblock J. Approx. Theory 79(3), 418--443 (1994).

\bibitem{BB96}
H.H. Bauschke and J.M. Borwein.:
\newblock On projection algorithms for solving convex feasibility problems.
\newblock SIAM Rev. 38, 367--426 (1996).

\bibitem{BBL97}
H.H. Bauschke, J.M. Borwein and A. Lewis.:
\newblock The method of cyclic projections for closed convex sets in Hilbert space.
\newblock Contemp. Math. 204, 1--38 (1997).

\bibitem{BCL02}
H.H. Bauschke, P.L. Combettes and D.R. Luke.:
\newblock Phase retrieval, error reduction algorithm, and Fienup variants: a view from convex optimization.
\newblock J. Opt. Soc. Am. A. 19(7), 1334-1345 (2002).

\bibitem{BCL03}
H.H. Bauschke, P.L. Combettes and D.R. Luke.:
\newblock Hybrid projection--reflection method for phased retrieval.
\newblock J. Opt. Soc. Am. A. 20(6), 1025-1034 (2003).

\bibitem{BCL04}
H.H. Bauschke, P.L. Combettes and D.R. Luke.:
\newblock Finding best approximation pairs relative to two closed convex sets in Hilbert space.
\newblock J. Approx. Theory 127(2), 178--192 (2004).

\bibitem{cat}
M. Ba{\v{c}}{\'a}k, M., I. Searston, I., B. Sims.:
\newblock Alternating projections in {CAT(0)} spaces.
\newblock J. Math. Analysis and Appl. 385(2), 599--607 (2012).

\bibitem{BR05}
E.G. Birgin and M. Raydan.:
\newblock Robust stopping criteria for Dykstra's algorithm.
\newblock SIAM J. Sci. Comput. 26(4), 1405--1414 (2005).

\bibitem{BL06}
J.M. Borwein and A.S. Lewis.:
\newblock Convex analysis and nonlinear optimization.
\newblock Springer (2006).

\bibitem{BL11}
J.M. Borwein and D.R. Luke.:
\newblock Entropic regularization of the $\ell_0$ function.
\newblock In: Fixed-Point Algorithms for Inverse Problems in Science and
  Engineering, pp. 65--92. Springer (2011).

\bibitem{BS11}
J.M. Borwein and B.~Sims.:
\newblock The {D}ouglas--{R}achford algorithm in the absence of convexity.
\newblock In: Fixed-Point Algorithms for Inverse Problems in Science and
  Engineering, pp. 93--109. Springer (2011).

\bibitem{BT13}
J.M. Borwein and M.K. Tam.:
\newblock A cyclic {D}ouglas-{R}achford iteration scheme.
\newblock J. Optim. Theory Appl. (2013).
\newblock doi: \url{10.1007/s10957-013-0381-x}

\bibitem{BV04}
S.P. Boyd and L. Vandenberghe.:
\newblock Convex Optimization.
\newblock Cambridge University Press (2004).

\bibitem{BD86}
 J. Boyle and R. Dykstra.:
\newblock A method for finding projections onto the intersection of convex sets in Hilbert spaces.
\newblock Lecture Notes in Statistics 37, 28-47 (1986).

\bibitem{CY11}
Y. Chen and X. Ye.:
\newblock Projection onto a simplex.
\newblock \url{http://arxiv.org/abs/1101.6081v2} (2011).

\bibitem{D06}
P. Drineas, A. Javed, M. Magdon-Ismail, G. Pandurangant, R. Virrankoski, and A. Savvides.:
\newblock Distance matrix reconstruction from incomplete distance information for sensor network localization.
\newblock Proc. of the IEEE Int. Conf. on Sens. and Ad Hoc Commun. and Netw. (SECON'06) 2, 536--544 (2006).

\bibitem{ER11}
R. Escalante and M. Raydan.:
\newblock Alternating projection methods.
\newblock SIAM (2011).

\bibitem{ALA12}
L. Fabry-Asztalos, I. Lorentz and R. Andonie.:
\newblock Molecular distance geometry optimization using geometric build-up and evolutionary techniques on GPU.
\newblock Comput. Intell. in Bioinforma. and Comput. Biol. 321--328 (2012).

\bibitem{GM89}
N. Gaffke and R. Mathar.:
\newblock A cyclic projection algorithm via duality.
\newblock Metrika 36, 29--54 (1989).

\bibitem{GTSC13}
M.R. Gholami, L. Tetruashvili, E.G. Ström and Y. Censor.:
\newblock Cooperative wireless sensor network positioning via implicit convex feasibility.
\newblock IEEE Transactions on Signal Process. (2013).

\bibitem{GHHW90}
W Glunt, T.L. Hayden, S. Hong, and J. Wells.:
\newblock An alternating projection algorithm for computing the nearest Euclidean distance matrix
\newblock SIAM J. Matrix Anal. Appl. 11(4), 589--600 (1990).

\bibitem{HW88}
T.L. Hayden and J. Wells.:
\newblock Approximation by matrices positive semidefinite on a subspace
\newblock Linear Algebra Appl. 109, 115--130 (1988).

\bibitem{HL12}
R. Hesse and D.R. Luke.:
\newblock Nonconvex notions of regularity and convergence of fundamental algorithms for feasibility problems.
\newblock Preprint \url{http://arxiv.org/abs/1212.3349} (2012).

\bibitem{H86}
N.J. Higham.:
\newblock Computing the polar decomposition---with applications.
\newblock SIAM J. Sci. Stat. Comput. 7(4), 1160--1174 (1986).

\bibitem{H11}
A. Hj\o rungnes.:
\newblock Complex-valued matrix derivatives: with applications in signal processing and communications.
\newblock Cambridge University Press  (2011).

\bibitem{H06}
K.J. Horadam.:
\newblock Hadamard Matrices and their Applications.
\newblock Princeton University Press (2007).

\bibitem{HJ85}
R.A. Horn and C.R. Johnson.:
\newblock Matrix analysis
\newblock Cambridge University Press (1985).

\bibitem{J90}
C.R. Johnson.
\newblock Matrix completion problems: a survey.:
\newblock Proc. of Sympos. in App. Math. 40, 171--198 (1990).

\bibitem{KS08}
C. Koukouvinos and S. Stylianou.:
\newblock On skew-Hadamard matrices.
\newblock Discret. Math. 309, 2723--2731 (2008).

\bibitem{KW10}
N. Kirslock and H. Wolkowicz.:
\newblock Explicity sensor network localization using semidefinite representations and facial reductions.
\newblock SIAM J. Optim. 20(5), 2679--2708 (2010).

\bibitem{L01}
M. Laurent.:
\newblock Matrix completion problems.
\newblock Encycl. of Optim. 3, 221--229 (2001).

\bibitem{LS12}
K.H. Leung and B. Schmidt.:
\newblock New restrictions on possible order of circulant Hadamard matrices.
\newblock Des. Codes Cryptorgr. 64, 143--151 (2012).

\bibitem{K79}
B.D. McKay.:
\newblock Hadamard equivalence via graph isomorphism.
\newblock Discret. Math. 27(2), 213--214 (1997).

\bibitem{PHB13}
J. Piantadosi, P. Howlett and J.M. Borwein.:
\newblock A checkerboard copula to simulate seasonal rainfall.
\newblock Revision submitted Journal of Hydrology (2013).

\bibitem{S35}
I.J. Schoenberg.:
\newblock Remarks to Maurice Fr\'echet's Article ``Sur la d\'efinition axiomatique d'une classe d'Espace distanci\'es vectoriellement applicable sur l'espace de Hilbert''
\newblock Ann. Math. 36(3), 724--732 (1935).

\bibitem{S66}
P.H. Sch\"onemann.:
\newblock A generalized solution to the orthogonal Procrustes problem.
\newblock Psychometrika 31, 1--10 (1966).

\bibitem{Yuen10}
A.K. Yuen, O. Lafon O, T. Charpentier, M. Roy, F. Brunet, P. Berthault, D. Sakellariou, B. Robert, S. Rimsky, F. Pillon, J.C. Cintrat, B. Rousseau.:
\newblock Measurement of long-range interatomic distances by solid-state tritium-NMR spectroscopy.
\newblock J. Am. Chem. Soc. 132(6), 1734--1735 (2010).


\end{thebibliography}

\end{document}